\documentclass[11pt, reqno]{amsart}
\usepackage{mathrsfs}
\usepackage[active]{srcltx}
\usepackage{mathrsfs,amsmath}
\usepackage{mathtools}
\usepackage{longtable}
\usepackage{todonotes}
\usepackage{appendix}
\usepackage[autostyle]{csquotes}
\allowdisplaybreaks

\newcommand{\abs}[1]{\left\lvert #1 \right\rvert}

\newcommand{\RR }{\mathbb{R}}

\UseRawInputEncoding

\usepackage{xspace}
\usepackage{xcolor}
\usepackage[normalem]{ulem}
\usepackage{etoolbox}

\newtoggle{ignoreflag}

\definecolor{bluecite}{HTML}{0875b7}
\usepackage[unicode=true,
bookmarksopen={true},
pdffitwindow=true,
colorlinks=true,
linkcolor=bluecite,
citecolor=bluecite,
urlcolor=bluecite,
hyperfootnotes=false,
pdfstartview={FitH},
pdfpagemode= UseNone]{hyperref}

\newcommand{\R}{{\mathbb R}}

\newtheorem{example}{Example}[section]
\newtheorem{proposition}{Proposition}[section]
\newtheorem{theorem}{Theorem}[section]

\newtheorem{corollary}{Corollary}[section]

\newtheorem{remark}{Remark}[section]
\numberwithin{equation}{section}

\usepackage{bbm}  
\usepackage{dsfont}

\title[Quantitative Stability]{Quantitative stability  for Bakry--\'Emery \\ log-Sobolev and Talagrand inequalities}
\vspace{-0.3cm}\author{Alexandru Krist\'aly$^*$ \& Alexandru P\^irvuceanu}

\address{}

\email{}

\address{
}

\email{}

\address{\textsc{Alexandru Krist\'aly}: Department of Economics, Babe\c s-Bolyai University, str. Teodor Mihali 58-60, 400591, Cluj-Napoca, Romania \& 
	Institute of Applied Mathematics, \'Obuda
	University, B\'ecsi \'ut 96/B, 1034,
	Budapest, Hungary.} 
\email{alex.kristaly@econ.ubbcluj.ro; kristaly.alexandru@uni-obuda.hu}

\address{\textsc{Alexandru P\^irvuceanu}: Department of Mathematics, Babe\c s-Bolyai University, 1 Mihail Kog\u{a}lniceanu, 400084, Cluj-Napoca, Romania.} 
\email{alexandru.pirvuceanu@ubbcluj.ro, pirvuceanualexandrudaniel@gmail.com}

\thanks{Research of A.K. supported by the
	Excellence Researcher Program \'OE-KP-2-2022 of \'Obuda University, Hungary.\\ 
\quad \hspace*{0.25cm} $^*$Corresponding author.
}

\subjclass[]{ 
	35A23, 35R45, 35B35.
}
\keywords{}
\thanks{}

\begin{document}
\vspace{-0.4cm}
\begin{abstract} 
	We establish quantitative $L^
	1$-stability estimates for the Bakry--\'Emery log-Sobolev and Talagrand inequalities with a universal exponent of 1/19 governing the corresponding deficits.
	Our approach relies on a Maurey-type argument combined with stability estimates for the Pr\'ekopa--Leindler inequality.
	In the radial setting, the exponent of both deficits can be improved to $1/2,$ which turns out to be optimal. As an application, we establish an estimate  for the  hypercontractivity deficit  of the Hopf--Lax semigroup in the Bakry--\'Emery setting. 
		In particular, these stability results provide elementary characterizations  for the equality cases in the previously mentioned inequalities.

	
\end{abstract}
\vspace{-0.4cm}
\dedicatory{Dedicated to professor P\'eter T. Nagy on the occasion of his 80th birthday}

\maketitle
\vspace{-1.1cm}
\section{Introduction and Main Results} 

Sharp Sobolev inequalities have a long history, their systematic  study being initiated  in the seminal works of Aubin \cite{Aubin} and Talenti \cite{Talenti}. The sharpness in these inequalities immediately raises the question of characterizing the equality cases; in the case of the classical Sobolev 
 inequality, there is a unique class of extremizers, formed by the family of Talentian 
 bubbles.  An intriguing problem concerning the \textit{stability}  was formulated by  Br\'ezis and Lieb \cite{Brezis-Lieb}:   describe a meaningful estimate for the Sobolev deficit  in terms of some "natural distance" from the manifold of extremizers. Roughly speaking,   how far is a function from the manifold of extremizers whenever its Sobolev deficit is sufficiently small? 

The first  stability result was provided by Bianchi and Egnell \cite{BianchiEgnell} for the classical Sobolev \mbox{inequality}. This pioneering work opened a research direction for stability results not only for Sobolev-type \mbox{functional} inequalities, see, e.g.,\ Cianchi,  Fusco, Maggi, and Pratelli \cite{CFMP}, Deng, Sun, and Wei \cite{DSW}, Figalli and Zhang \cite{FigalliRu}, Dolbeault, Esteban,
Figalli, Frank, and Loss \cite{Dolbeault-et-al}, but also for geometric (isoperimetric, Brunn--Minkowski, Borell--Brascamp--Lieb) inequalities, see, e.g.,\ Figalli, Maggi, and Pratelli \cite{FigalliMaggiPratelli}, Figalli and Jerison \cite{FigalliJerison}, Figalli,  van Hintum, and Tiba \cite{Figalli-vanHintum-Tiba}.   

Besides the classical Sobolev inequality, the \textit{log-Sobolev inequality} has been widely studied as well. Indeed, its various forms appear as indispensable tools to describe nonlinear
phenomena, such as the solution to Poincar\'e's conjecture, see Perelman \cite{Perelman}, hydrodynamic scalings for  interacting particles, see Yau \cite{Yau}, hypercontractivity estimates for Hopf--Lax semigroups, see, e.g.,\
Bobkov, Gentil, and Ledoux \cite{BGL-JMPA}, Gentil \cite{Gentil}, Otto and Villani \cite{Otto-Villani}. 

Although several stability results are available  for log-Sobolev inequalities -- mainly with the  \mbox{Gaussian} measure
-- 
one of our main motivations   is to provide  new quantitative stability estimates in $\mathbb R^n$ in the Bakry--\'Emery setting by adapting a Maurey-type argument via the Pr\'ekopa--Leindler inequality. It turns out that this  approach also successfully applies to establishing stability  for the \textit{Talagrand \mbox{transport} inequality}. These results  can also be applied to estimate the \textit{hypercontractivity deficit}  of the Hopf--Lax semigroup in the Bakry--\'Emery setting. An important byproduct of these stability results is the characterization of the equality in the Talagrand inequality and the hypercontractivity estimate in the general Bakry--\'Emery framework in $\mathbb R^n$, which seem to be missing pieces in the literature.

The following subsections are devoted to the main results of the paper, together with a discussion of their connections to existing works.

\subsection{Bakry--\'Emery  log-Sobolev inequality}
Let ${\rm d}\mu_V=e^{-V}{\rm d}x$  be a probability measure on $\mathbb R^n$ verifying the \textit{Bakry--\'Emery  property}, namely,  $V\in C^2(\mathbb R^n)$ and  $\nabla^2 V -cI_n\geq 0$ for some $c>0.$
The \textit{Bakry--\'Emery  log-Sobolev inequality}, see  \cite{Bakry-Emery}, 
 states that  
\begin{equation}\label{gaussian-logsob-00}
	{\rm Ent}_{\mu_V}(f^2)\le \frac{2}{c}\int_{\RR^n} |{\nabla f}|^2\, {\rm d}\mu_V,\quad  \forall f\in W^{1, 2}(\RR^n, \mu_V),
\end{equation}
where 
$$	{\rm Ent}_{\mu_V}(f^2)=\int_{\mathbb R^n}f^2 \log f^2 {\rm d}\mu_V - \left(\int_{\mathbb R^n}f^2  {\rm d}\mu_V\right) \log\left(\int_{\mathbb R^n}f^2  {\rm d}\mu_V\right)$$
stands for the  entropy of $f^2$ with respect to the measure  ${\rm d}\mu_V,$ while $W^{1, 2}(\RR^n, \mu_V)$ is the family of   functions $f\in L^2(\RR^n, \mu_V)$ with $|\nabla f|\in L^2(\RR^n, \mu_V).$ Alternative proofs of \eqref{gaussian-logsob-00} are provided by   optimal mass transport arguments, see Otto and Villani \cite{Otto-Villani}, Bobkov and Ledoux \cite{BobkovLedoux},  Bobkov,   Gentil,  and Ledoux \cite[Theorem 4.1]{BGL-JMPA}, and  Cordero--Erausquin \cite{Cordero-ARMA}.

Note that the constant $2/c$  is sharp, while  the  characterization of  equality  in \eqref{gaussian-logsob-00} is   known from the work of  Arnold,   Markowich,  Toscani, and   Unterreiter \cite[Theorem 3.11]{CPDE-equality}. 
The most familiar form of  \eqref{gaussian-logsob-00} is the \textit{Gross log-Sobolev inequality} for the Gaussian measure $\gamma:=\mu_{V_G}$, the potential being $$V_G(x)=\frac{1}{2}|x|^2+\frac{n}{2}\log({2\pi}),\quad x\in \mathbb R^n,$$ see Gross \cite{Gross}; the characterization of the equality in this case was first given by  Carlen \cite{Carlen}.

In view of  \eqref{gaussian-logsob-00}, we   introduce the \textit{Bakry--\'Emery log-Sobolev deficit} for $f\in W^{1, 2}(\RR^n, \mu_V)\setminus \{0\}$, namely 
$$\delta^{\sf LSI}_{\mu_V}(f):=\frac{\displaystyle\frac{2}{c}\int_{\RR^n} |{\nabla f}|^2\, {\rm d}\mu_V-{\rm Ent}_{\mu_V}(f^2)}{\displaystyle\int_{\mathbb R^n} f^2{\rm d}\mu_V} \ge 0.$$
It is clear that $\delta^{\sf LSI}_{\mu_V}(\lambda f)=\delta^{\sf LSI}_{\mu_V}(f)$ for every $\lambda>0.$

There is a significant number of stability estimates for \eqref{gaussian-logsob-00} in terms of the above deficit in various metrics, mostly for the Gaussian measure $\gamma=\mu_{V_G}$.
For instance, Indrei and Marcon \cite{Indrei-Marcon} and Bobkov,   Gozlan,   Roberto, and Samson \cite{BGRS} obtained   stability results  with  respect to the  Wasserstein distance $W_2$; similar results are obtained by Indrei and Kim \cite{IndreiKim}  in terms of the $W_1$ and $L^1$  distances. Under the validity of a $(2, 2)$-Poincar\'e inequality, Fathi, Indrei, and Ledoux \cite{FathiIndreiLedoux} established a strict improvement of \eqref{gaussian-logsob-00} for centered probability measures, which in turn yields stability bounds in terms of the $W_2$ and $L^1$ distances. Very recently, Dolbeault, Esteban, Figalli, Frank, and Loss \cite[Corollary 1.2]{Dolbeault-et-al} established a new type of sharp $L^2$-estimate  for the deficit $\delta^{\sf LSI}_{\mu_{V_G}}$. Further stability results for \eqref{gaussian-logsob-00} with the Gaussian measure can be found in the papers by Bez,  Nakamura, and   Tsuji \cite{Bez-etal}, Dolbeault and Toscani \cite{Dolbeault-Toscani}, Ledoux,   Nourdin, and Peccati \cite{LNP}, Suguro \cite{Suguro}, and the references therein. 
  For a recent survey paper, see  Brigati, Dolbeault, and Simonov \cite{Brigati-Dolb-Sim}.


When it comes to stability bounds for \eqref{gaussian-logsob-00} in the case of non-Gaussian probability measures, there are significantly less results available in the literature. Under the Bakry--\'Emery property  with $c=1$, Courtade and Fathi \cite{CourtadeFathi} proved a stability result with respect to the $W_1$ Wasserstein distance for nonnegative functions whose logarithm is a $\lambda-$Lipschitz function, with their constant depending on $\lambda>0$. Gozlan \cite{Gozlan} established a deficit bound with respect to the $W_2$ Wasserstein distance in the case of an unconditional potential $V$, whereas Bolley, Gentil, and Guillin \cite{BolleyGentilGuillin} obtained a dimensional improvement of the log-Sobolev inequality \eqref{gaussian-logsob-00}. 

Motivated by these results, our first goal is to provide a stability estimate for the log-Sobolev inequality \eqref{gaussian-logsob-00} under the validity of the general Bakry--\'Emery  property. For the class of   log-concave functions in $W^{1, 2}(\RR^n, \mu_V)$ whose Bakry--\'Emery log-Sobolev deficit is sufficiently small,  
we have the following:

	%
	%
	%

\begin{theorem} \label{Theorem-LSI} {\rm (Bakry--\'Emery  log-Sobolev inequality)} Let ${\rm d}\mu_V=e^{-V}{\rm d}x$  be a probability measure on $\mathbb R^n$ such that $V\in C^2(\mathbb R^n)$ and  $\nabla^2 V -cI_n\geq 0$ for some $c>0.$ Then there exists a dimension-depending constant $C_n>0$ with the following property$:$ for every positive  log-concave  function $f\in W^{1, 2}(\RR^n, \mu_V)$ with $\delta^{\sf LSI}_{\mu_V}(f)<C_n^{-19},$ 
		there is a point $x_0\in \mathbb R^n$   such that
		\begin{equation}\label{stability-estimate-00}
			\int_{\RR^n}\abs{\frac{f^2(x)}{\alpha}-e^{V(x)-V(x-x_0)}}\, {\rm d}\mu_V(x)\le C_n\cdot (\delta^{\sf LSI}_{\mu_V}(f))^{\frac{1}{19}},
		\end{equation}
		where $\alpha=\displaystyle \int_{\mathbb R^n} f^2{\rm d}\mu_V$.
	\end{theorem}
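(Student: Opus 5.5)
We follow the Maurey-type route via the Prékopa--Leindler inequality, adapted to the Bakry--Émery setting and kept quantitative throughout. Normalize $\alpha=1$; this is allowed since the deficit is $0$-homogeneous. Write $f^2=e^{-\psi}$ with $\psi$ convex. Since $f$ is log-concave, $\psi$ is locally Lipschitz, so $\nabla f$ is defined a.e. Set $g=f^2e^{-V}=e^{-(\psi+V)}$, a probability density with $\nabla^2(\psi+V)\ge cI_n$ in the distributional sense. By scaling $x\mapsto x/\sqrt c$ we may assume $c=1$; the constants then depend only on $n$.

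\textbf{Step 1 (Prékopa--Leindler proof with an explicit deficit).} Fix $t\in(0,1)$, which will later be sent to $0$. Apply Prékopa--Leindler to the three functions
\[
u=e^{-V},\qquad v=e^{-\psi-V},\qquad w(z)=\exp\bigl(-V(z)+tQ(z)\bigr),
\]
where $Q$ is a suitable Hopf--Lax/Legendre-type transform of $\psi$. Uniform convexity of $V$ gives $(1-t)V(x)+tV(y)\ge V((1-t)x+ty)+\frac{t(1-t)}2|x-y|^2$. With this, the hypothesis $u^{1-t}v^t\le w((1-t)x+ty)$ holds, and we get $\int w\ge1$. Expanding $\int w$ to first order in $t$ and using $\int|\nabla f|^2d\mu_V=\frac14\int|\nabla\psi|^2g$ recovers \eqref{gaussian-logsob-00}. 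The error in this expansion is controlled by $t$ times quantities bounded through log-concavity. This yields the one-parameter inequality
\[
\mathrm{PL\ deficit}(t):=\int w-\Bigl(\int u\Bigr)^{1-t}\Bigl(\int v\Bigr)^t\le t\,\delta^{\sf LSI}_{\mu_V}(f)+C_n t^2 .
\]

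\textbf{Step 2 (stability for Prékopa--Leindler).} Next invoke a quantitative Prékopa--Leindler stability theorem for log-concave functions, of Figalli--van Hintum--Tiba / Böröczky--De type. It says that if the deficit, relative to $\int u\int v$ in the natural normalization, is at most $\varepsilon$, then $u$ and $v$ are close in $L^1$, up to a translation and multiplicative factor, with error $C_n\,\varepsilon^{\beta}$ for some explicit power $\beta$ depending on $t$ only through a factor like $(t(1-t))^{-\gamma}$. Applied here, this gives
\[
\|e^{-\psi-V}-e^{-V(\cdot-x_0)}\|_{L^1}\le C_n\,t^{-\gamma}\bigl(t\delta+C_nt^2\bigr)^{\beta}.
\]
The multiplicative constant is forced to be $1$ since both functions are probability densities. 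Dividing by $e^{-V}$ inside the integral against $\mu_V$ is exactly \eqref{stability-estimate-00}.

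\textbf{Step 3 (optimization).} Choose $t=\delta^{a}$ to balance the two error terms. With the available exponents, I expect the resulting power to come out as $1/19$. The smallness condition $\delta<C_n^{-19}$ guarantees $t<1/2$, which places us in the regime where the Prékopa--Leindler stability theorem applies.

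\textbf{Main obstacle.} The main obstacle is Step 1: controlling the second-order term in $t$ uniformly in $f$. It involves moments of $\nabla\psi$ and the regularity of $Q$, since $\psi$ is only convex and not smooth. My first attempt would be to mollify $\psi$ and then bound moments of $g$ using uniform log-concavity, through Poincaré-type and Gaussian-tail moment bounds. A complementary issue is that the Prékopa--Leindler stability result requires controlled dependence on $t$; this dependence dictates the final exponent.
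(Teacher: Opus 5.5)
Your Step 1 does not produce the log-Sobolev deficit, which is a genuine gap. With your weights ($e^{-V}$ at $1-t$, $f^2e^{-V}=e^{-\psi-V}$ at $t$, and $c=1$), the constraint $z=(1-t)x+ty$ gives $|x-y|=|z-y|/(1-t)$. So the best $Q$ your uniform-convexity argument allows is $Q=-{\bf Q}_{1-t}\psi$, a Hopf--Lax transform at time $1-t\to 1$, not an infinitesimal one. Hence
\[
\int_{\mathbb R^n} w\,{\rm d}x-\Bigl(\int_{\mathbb R^n} u\,{\rm d}x\Bigr)^{1-t}\Bigl(\int_{\mathbb R^n} v\,{\rm d}x\Bigr)^{t}=-t\int_{\mathbb R^n}{\bf Q}_1\psi\,{\rm d}\mu_V+o(t),
\]
and the first-order coefficient contains no $|\nabla\psi|^2$ at all. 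This is the Maurey/Bobkov--Ledoux route to the dual Talagrand inequality: inserting the Brenier map into the Hopf--Lax infimum shows the coefficient is $\ge\delta^{\sf Tal}_{\mu_V}(f^2{\rm d}\mu_V)$. Nothing you wrote bounds it by $\delta^{\sf LSI}_{\mu_V}(f)$, so the claimed ${\rm PL\ deficit}(t)\le t\,\delta^{\sf LSI}_{\mu_V}(f)+C_nt^2$ is unsupported. The small weight must sit on $e^{-V}$. The paper writes $f^2=e^g$ and takes $u_\lambda=e^{g/(1-\lambda)-V}$ (weight $1-\lambda$), $v=e^{-V}$ (weight $\lambda$), and $w_\lambda=e^{g_\lambda-V}$, the sup-convolution. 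Then \eqref{V-strict-convex} reduces $g_\lambda$ to a Hopf--Lax supremum at time $\lambda/(c(1-\lambda))\to 0$, and concavity gives $g_\lambda\le g+\frac{\lambda}{2c(1-\lambda)}|\nabla g|^2$ a.e. The $\lambda$-derivatives at $0^+$ are $\frac2c\int|\nabla f|^2{\rm d}\mu_V$ and ${\rm Ent}_{\mu_V}(f^2)$, which is what produces $\delta^{\sf LSI}_{\mu_V}(f)$. Swapping your weights ($e^{-\psi-V}$ at $1-t$, $e^{-V}$ at $t$) would also work, with $u,v$ independent of $t$.

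Step 3 also misreads where $1/19$ comes from and what the smallness hypothesis is for. B\"or\"oczky--De bound the distance by a multiple of $(\varepsilon/\tau)^{1/19}$ with $\tau=\min(\lambda,1-\lambda)$, i.e.\ your $\gamma=\beta=1/19$. Since $\limsup_{\lambda\to0^+}\varepsilon_\lambda/\tau_\lambda\le\delta^{\sf LSI}_{\mu_V}(f)$ for \emph{fixed} $f$ (by l'Hospital), one simply lets $\lambda\to0^+$. There is nothing to balance, and no second-order bound uniform in $f$ is needed, so your ``main obstacle'' is an artifact of fixing $t=\delta^a$ (which also breaks down when $\delta=0$). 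What the limit does require, and what you omit, is control of the translations $x_0^\lambda$ that the stability theorem supplies for each $\lambda$. If $|x_0^{\lambda_k}|\to\infty$, the translates of $e^{-V}$ escape to infinity and the $L^1$ distance tends to its maximal value instead of $0$. That contradicts the bound $C_n(\delta^{\sf LSI}_{\mu_V}(f))^{1/19}<1$. This is exactly where $\delta^{\sf LSI}_{\mu_V}(f)<C_n^{-19}$ is used, not to force $t<1/2$. Boundedness then gives a subsequence $x_0^{\lambda_k}\to x_0$, and passing to the limit yields \eqref{stability-estimate-00}. Finally, the B\"or\"oczky--De theorem needs $w$ log-concave; for the sup-convolution this follows from their Lemma 7.3.
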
 

The proof of Theorem \ref{Theorem-LSI} is based on a   stability result for the Pr\'ekopa--Leindler inequality due to B\"or\"oczky and De \cite{BoroczkyDe}, combined with a Maurey-type argument  \cite{Maurey}. The latter argument has been successfully applied by   Bobkov and Ledoux \cite{BobkovLedoux} to prove \eqref{gaussian-logsob-00}; see also Bobkov and G\"otze \cite{Bobkov-Gotze}. By means of the improved Pr\'ekopa--Leindler inequality of Bucur and Fragal\`a \cite{BucurFragala}, this argument was used by Feo, Indrei, Posteraro, and Roberto \cite{Feo-Indrei etal} to obtain a stability result for \eqref{gaussian-logsob-00} in the case of the Gaussian measure. Under an additional growth condition for the log-concave function $f$,  Balogh and Krist\'aly \cite[Theorem 5.3]{Balogh-Kristaly-preprint}  obtained relation \eqref{stability-estimate-00} very recently in the case of the Gaussian measure.

We emphasize that the only condition on the probability measure $\nu=\frac{f^2}{\alpha} \mu_V$ is the log-concavity of its density.  This is strongly related to the problem raised by Indrei and Kim \cite{IndreiKim} 
 concerning the largest class of probability densities for which an $L^1$-stability   holds for the (Gross) log-Sobolev inequality \eqref{gaussian-logsob-00}. Note that in \cite{IndreiKim} the   class of centered probability measures with a  uniform  second momentum bound is shown to work. In addition, our estimate \eqref{stability-estimate-00} is valid not only for the 
Gaussian measure, but also for measures $\mu_V$ with $V$ verifying the  
 Bakry--\'Emery property.
 
 
The use of  the  Pr\'ekopa--Leindler stability from B\"or\"oczky and De \cite{BoroczkyDe} requires the log-concavity of $f$. Although there are more recent stability results for the Pr\'ekopa--Leindler   inequality without such log-concavity restrictions, see Figalli, van Hintum, and Tiba \cite{Figalli-vanHintum-Tiba},  certain technicalities prevent the aforementioned Maurey-type approach to provide meaningful deficit bounds. 

At a first glance, relation  \eqref{stability-estimate-00} implies that in the proximity of the log-concave function $f^2$ there is a not necessarily log-concave function $x\mapsto e^{V(x)-V(x-x_0)}$ for some $x_0\in \mathbb R^n$. 
However,  relation \eqref{stability-estimate-00} yields a   \textit{rigidity} for $V$ when equality holds in \eqref{gaussian-logsob-00}: one has
$\delta^{\sf LSI}_{\mu_V}(f)=0$ for a positive log-concave   $f\in W^{1, 2}(\RR^n, \mu_V)$   if and only if $f(x)=a\cdot e^{\langle x_0,x\rangle}$ for some $a>0$  and $$x_0\in {\rm Ker}(\nabla^2 V -cI_n):=\bigcap_{x\in \mathbb R^n}{\rm ker}(\nabla^2 V(x) -cI_n),$$
where ${\rm ker}(M)$ is the null space (or the kernel) of the $n\times n$ matrix $M$.
%
%
%
In fact, the latter algebraic property is equivalent to the fact  that for some $C_0\in \mathbb R$, one has $V(x)-V(x-x_0 )=c\langle x_0,x\rangle +C_0$ for every $x\in \mathbb R^n,$ cf.\   Proposition \ref{V-properties}, which clearly explains the profile of the extremizer $f$ in \eqref{gaussian-logsob-00} coming from the expression $x\mapsto e^{V(x)-V(x-x_0)}$. 
This algebraic formulation is an alternative characterization of  the equality case in \eqref{gaussian-logsob-00} expressed  analytically  by Arnold,   Markowich,  Toscani, and   Unterreiter \cite{CPDE-equality};   in    Remark \ref{remark-equal} we compare the two approaches.  The above characterization is also valid without the log-concavity of $f$, which we   obtain by the famous HWI inequality of Otto and Villani \cite{Otto-Villani} and  a stability of 
Talagrand's  inequality, see Corollary \ref{corollary-LSI=}.

The exponent $1/19$ of the deficit $\delta^{\sf LSI}_{\mu_V}(f)$ in   \eqref{stability-estimate-00} is not expected to be optimal, as Indrei and Kim \cite[Theorem 1.1]{IndreiKim} have already stated an $L^1$-deficit estimate with the exponent $1/4$  for centered measures having a uniformly bounded second momentum. In fact, Kim \cite[Remark 1.2]{Kim} formulated -- for the Gaussian measure  $\gamma=\mu_{V_G}$ -- the conjecture that the optimal exponent in the $L^1$-deficit estimate should be between $1/4$ and $1.$ In the setting of  \cite{IndreiKim} (i.e., for centered measures having a uniformly bounded second momentum),   Indrei \cite{IndreiJDE} stated the optimality of the exponent 1/2  with respect to the $W_1$ Wasserstein metric.  The following result confirms Kim's conjecture  in the   \textit{radial} case, by proving the  optimality of the exponent 1/2 in the $L^1$-deficit estimate:




\begin{theorem} \label{Theorem-LSI-radial} {\rm (Radial Bakry--\'Emery  log-Sobolev inequality)} Let ${\rm d}\mu_V=e^{-V}{\rm d}x$  be a probability measure on $\mathbb R^n$ such that $V\in C^2(\mathbb R^n)$ is radial and  $\nabla^2 V -cI_n\geq 0$ for some $c>0.$ The following statements hold$:$ 
	
	\begin{enumerate}
		\item[(i)] {\rm (Stability)} There exists a dimension-depending constant $C_n>0$ with the following property$:$ for every positive log-concave and radial function $f\in W^{1, 2}(\RR^n, \mu_V)$,  one has 
		\begin{equation}\label{stability-estimate-00-radial}
			\int_{\RR^n}\abs{\frac{f^2(x)}{\alpha}-1}\, {\rm d}\mu_V(x)\le C_n\cdot (\delta^{\sf LSI}_{\mu_V}(f))^{\frac{1}{2}},
		\end{equation}
		where $\alpha=\displaystyle \int_{\mathbb R^n} f^2{\rm d}\mu_V$.
		\item[(ii)] {\rm (Optimality)} The exponent $\frac{1}{2}$ of $\delta^{\sf LSI}_{\mu_V}(f)$ in \eqref{stability-estimate-00-radial} is optimal, in the sense that it cannot be replaced by a larger absolute constant for the family of probability measures  ${\rm d}\mu_V=e^{-V}{\rm d}x$ with $V$ satisfying the above Bakry--\'Emery property.  
	\end{enumerate}  
\end{theorem}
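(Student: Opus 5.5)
The plan is to reduce the $L^1$ estimate to an entropy estimate via Pinsker's inequality, and then bound the entropy by the deficit using an improved log-Sobolev inequality for centered measures. Rescaling $x\mapsto x/\sqrt c$ keeps radiality, log-concavity and the form of the deficit, so we may assume $c=1$.

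\textbf{Reduction.} Put $g=f^2/\alpha$ and ${\rm d}\nu=g\,{\rm d}\mu_V$, a probability measure. Pinsker's inequality gives
\begin{equation*}
\int_{\RR^n}|g-1|\,{\rm d}\mu_V\le \sqrt{2\,{\rm Ent}_{\mu_V}(g)}.
\end{equation*}
It therefore suffices to show ${\rm Ent}_{\mu_V}(g)\le C\,\delta^{\sf LSI}_{\mu_V}(f)$ with $C$ depending only on $n$. Write $I(f)=2\int|\nabla f|^2{\rm d}\mu_V/\alpha$, which equals $\frac12\int|\nabla\log g|^2{\rm d}\nu$. Then $\delta^{\sf LSI}_{\mu_V}(f)=I(f)-{\rm Ent}_{\mu_V}(g)$. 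If we had an \emph{improved} inequality ${\rm Ent}_{\mu_V}(g)\le(1-\varepsilon_n)I(f)$ with $\varepsilon_n>0$, then
\begin{equation*}
\delta^{\sf LSI}_{\mu_V}(f)\ge \varepsilon_n I(f)\ge \frac{\varepsilon_n}{1-\varepsilon_n}{\rm Ent}_{\mu_V}(g),
\end{equation*}
and \eqref{stability-estimate-00-radial} would follow.

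\textbf{Improved inequality (main obstacle).} Two structural facts make an improvement available.
\begin{itemize}
\item Since $f$ and $V$ are radial, $\nu$ is radial and hence centered. This removes the translation degeneracy: the extremizers $e^{\langle x_0,x\rangle}$ from the rigidity discussion are radial only when $x_0=0$.
\item Since $f$ is log-concave, $\nu=e^{-W}{\rm d}x$ with $W=V-2\log f+\log\alpha$ and $\nabla^2W\ge \nabla^2V\ge I_n$. By Bakry--\'Emery, $\nu$ satisfies a Poincar\'e inequality with constant at most $1$.
\end{itemize}
This is exactly the setting of the Fathi--Indrei--Ledoux improvement \cite{FathiIndreiLedoux}: centered measures satisfying a $(2,2)$-Poincar\'e inequality with controlled constant. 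I would invoke it to get ${\rm Ent}_{\mu_V}(g)\le(1-\varepsilon)I(f)$ with $\varepsilon$ depending only on the Poincar\'e constant, and possibly on $n$.

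The main difficulty is that \cite{FathiIndreiLedoux} is formulated for the Gaussian reference measure, so transferring it to a general radial Bakry--\'Emery $\mu_V$ is the step that needs the most care. I would first try to redo their argument directly. One decomposes $\nabla\log g$ into its mean-zero projection and uses the Poincar\'e inequality for $\nu$ together with the centering $\int x\,{\rm d}\nu=0$, which is where radiality enters. Combined with the HWI inequality, this should show that the Fisher information strictly dominates twice the entropy by a fixed fraction. If that fails, a fallback is a compactness/contradiction argument within the class of radial log-concave perturbations, using Proposition \ref{V-properties} to rule out nonconstant limits. That fallback, however, would not give explicit constants.

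\textbf{Optimality.} Take the Gaussian $V=V_G$ (so $c=1$) and $g_\varepsilon(x)=(1+\varepsilon)^{n/2}e^{-\varepsilon|x|^2/2}$, $\varepsilon>0$; then $f_\varepsilon=\sqrt{g_\varepsilon}$ is radial and log-concave. The measure $\nu_\varepsilon$ is the centered Gaussian with covariance $(1+\varepsilon)^{-1}I_n$, so explicit computation gives:
\begin{itemize}
\item ${\rm Ent}_{\gamma}(g_\varepsilon)=\frac n2(\sigma^2-1-\log\sigma^2)$ with $\sigma^2=(1+\varepsilon)^{-1}$, hence ${\rm Ent}_\gamma(g_\varepsilon)=\frac n4\varepsilon^2+O(\varepsilon^3)$;
\item $I(f_\varepsilon)=\frac12\varepsilon^2\int|x|^2{\rm d}\nu_\varepsilon=\frac n2\varepsilon^2+O(\varepsilon^3)$;
\item therefore $\delta^{\sf LSI}_{\gamma}(f_\varepsilon)=\frac n4\varepsilon^2+O(\varepsilon^3)$;
\item $\int|g_\varepsilon-1|\,{\rm d}\gamma\ge c_n\varepsilon$ for small $\varepsilon$, since $g_\varepsilon-1\approx\varepsilon(\frac n2-\frac{|x|^2}{2})$.
\end{itemize}
If \eqref{stability-estimate-00-radial} held with an exponent $\beta>1/2$, we would get $c_n\varepsilon\le C_n'\varepsilon^{2\beta}$, which is impossible as $\varepsilon\to0$. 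This proves (ii).
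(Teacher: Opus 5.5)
Your reduction is correct: rescaling to $c=1$, Pinsker's inequality $\int|g-1|\,{\rm d}\mu_V\le\sqrt{2\,{\rm Ent}_{\mu_V}(g)}$, and the observation that an improved inequality ${\rm Ent}_{\mu_V}(g)\le(1-\varepsilon_n)I(f)$ forces ${\rm Ent}_{\mu_V}(g)\lesssim\delta^{\sf LSI}_{\mu_V}(f)$. Your part (ii) is the same Gaussian computation as in the paper.

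\textbf{The gap.} Part (i) rests entirely on the improved inequality, which must hold uniformly over all radial $V$ with $\nabla^2V\ge I_n$, and you do not prove it. As you note, \cite{FathiIndreiLedoux} treats only the Gaussian reference measure, and neither of your fallbacks closes the gap.
\begin{itemize}
\item Your static plan has no mechanism. Applying Poincar\'e for $\nu$ to $\partial_i\log g$ bounds $\int|\nabla\log g|^2{\rm d}\nu$ by $\int\|\nabla^2\log g\|^2{\rm d}\nu$. That quantity appears neither in HWI nor in ${\rm Ent}$ or $I$.
\item HWI, ${\rm Ent}\le W_2\sqrt{2I(f)}-\frac12W_2^2$, yields a fixed-fraction gain only if $W_2(\nu,\mu_V)$ stays a fixed fraction away from $\sqrt{2I(f)}$. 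Nothing in your sketch supplies that.
\item The $\nu$-mean of $\nabla\log g$ is $\int\nabla V\,{\rm d}\nu$, not the barycenter of $\nu$. The centering you invoke is only the Gaussian special case of the condition actually needed.
\item The compactness fallback would have to be uniform over a non-compact family of potentials as well as of densities.
\item In the regime $g\to1$, $\delta$ and ${\rm Ent}$ vanish together. Excluding nonconstant limits via Proposition \ref{V-properties} says nothing about their ratio; what is needed there is a quantitative radial spectral gap.
\end{itemize}

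\textbf{How to close it.} Run the Fathi--Indrei--Ledoux argument dynamically for $L=\Delta-\nabla V\cdot\nabla$ (with $c=1$). Set $h_t=P_tg$, which is radial because $L$ commutes with $O(n)$, and $\nu_t=h_t\mu_V$.
\begin{itemize}
\item One has ${\rm Ent}_{\mu_V}(g)=\int_0^\infty I_{\mu_V}(h_t)\,{\rm d}t$, where $I_{\mu_V}(h)=\int|\nabla h|^2/h\,{\rm d}\mu_V$.
\item One has $\frac{{\rm d}}{{\rm d}t}I_{\mu_V}(h_t)=-2\int\Gamma_2(\log h_t)\,{\rm d}\nu_t$, with $\Gamma_2(\phi)\ge\|\nabla^2\phi\|^2+|\nabla\phi|^2$.
\item By symmetry, $\int\partial_i\log h_t\,{\rm d}\nu_t=\int h_t\,\partial_iV\,{\rm d}\mu_V=0$ for every $t$.
\item The Poincar\'e constant satisfies $C_P(\nu_t)\le1$ for every $t$. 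To see this, write ${\rm Var}_{\nu_t}(\phi)=\int[P_t(\phi^2)-(P_t\phi)^2]\,{\rm d}\nu+{\rm Var}_\nu(P_t\phi)$. Then use $P_t(\phi^2)-(P_t\phi)^2\le(1-e^{-2t})P_t|\nabla\phi|^2$, $|\nabla P_t\phi|\le e^{-t}P_t|\nabla\phi|$, and $C_P(\nu)\le1$, which follows from log-concavity of $g$.
\item Applying Poincar\'e for $\nu_t$ to each $\partial_i\log h_t$ gives $\frac{{\rm d}}{{\rm d}t}I_{\mu_V}(h_t)\le-4I_{\mu_V}(h_t)$.
\end{itemize}
Hence ${\rm Ent}_{\mu_V}(g)\le\frac14I_{\mu_V}(g)=\frac12I(f)$, so $\delta^{\sf LSI}_{\mu_V}(f)\ge{\rm Ent}_{\mu_V}(g)$. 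Pinsker then gives (i) with constant $\sqrt2$, modulo standard approximation.

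\textbf{Comparison with the paper.} The paper proves (i) by a different route: the Maurey-type limit $\lambda\to0^+$ applied to the radial Pr\'ekopa--Leindler stability of Figalli--Ramos (Corollary \ref{Stability_sharp-corollary}). The semigroup route would control the entropy, not just the $L^1$ distance, and with a dimension-free constant. Its cost is $\Gamma_2$-calculus and regularity issues that the Pr\'ekopa--Leindler route avoids. As submitted, however, your proposal leaves this central step unproved.
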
 


The proof of the first part from Theorem \ref{Theorem-LSI-radial} is based on a recent result of Figalli and Ramos \cite{FigalliRamos}, which gives the expected constant  $1/2$. Furthermore, a direct construction involving a carefully chosen class of test-functions proves the sharpness of the exponent $1/2$.

\subsection{Talagrand transport inequality}
As before, let ${\rm d}\mu_V=e^{-V}{\rm d}x$ be a probability measure on $\mathbb R^n$ with $V$ verifying the Bakry--\'Emery  property from Theorem \ref{Theorem-LSI}. If $\nu$ is a probability measure on $\RR^n$ that is absolutely continuous with respect to $\mu_V $ (i.e., $\nu\ll \mu_V$), having density 
$\frac{{\rm d}\nu}{{\rm d}\mu_V},$
we define its \textit{relative entropy} (also called \textit{informational divergence}) by 
$$D(\nu || \mu_V)=\int_{\RR^n} \frac{{\rm d}\nu}{{\rm d}\mu_V}\log \frac{{\rm d}\nu}{{\rm d}\mu_V} \, {\rm d}\mu_V.$$
The  \textit{Wasserstein distance} between the measures $\nu$ and $ \mu_V$ is given by
$$W_2 (\nu, \mu_V)=\inf\left(\int_{\mathbb R^n}\int_{\mathbb R^n}|x-y|^2{\rm d}\pi(x,y)\right)^{1/2},$$
where the infimum is taken over all probability measures $\pi$ on $\mathbb R^n\times \mathbb R^n$ with
marginal distributions $\nu$ and $\mu_V$.  
The connection between these notions is given by the
\textit{Talagrand transport  inequality}, namely 
\begin{equation}\label{Talagrand-inequality}
	W_2^2(\nu, \mu_V)\le \frac{2}{c}D(\nu||\mu_V),
\end{equation}
see, e.g.,\ Bobkov and Ledoux \cite{BobkovLedoux}, Otto and Villani \cite{Otto-Villani}. Based on \eqref{Talagrand-inequality}, we introduce the \textit{Talagrand deficit} of the probability measure $\nu\ll \mu_V$ with respect to $\mu_V$, defined by 
$$\delta^{\sf Tal}_{\mu_V}(\nu):=D(\nu||\mu_V)-\frac{c}{2}W_2^2(\nu, \mu_V)\ge 0.$$

 In contrast to the stability of log-Sobolev inequalities,  there are much fewer results concerning the stability of the Talagrand transport inequality. As far as we know, the first stability result for the Talagrand inequality is due to Gozlan, Roberto, and Samson \cite{GRS}.  
 Fathi, Indrei, and Ledoux \cite{FathiIndreiLedoux} initiated  a systematic study of this problem in the Gaussian case. For probability measures absolutely continuous with respect to the standard Gaussian measure that satisfy a Poincar\'e inequality, they obtained deficit bounds in terms of the $W_{1, 1}$ Wasserstein distance. Shortly after \cite{FathiIndreiLedoux}, Fathi \cite{Fathi} \mbox{established} an improved inequality for centered probability measures, thus proving an $L^1$-stability for the \mbox{Talagrand}  inequality in the case of the standard Gaussian measure; however, his exponent depends on the dimension and behaves poorly in large dimensions. In Fathi's setting,  Mikulincer \cite{Mikulincer} provided a dimension-free improvement of Talagrand's Gaussian  transport-entropy inequality. For further results on the stability of Talagrand's inequality, see Bez,  Nakamura,  and   Tsuji \cite{Bez-etal},  Cordero-Erausquin  \cite{Cordero-Canadian}, Kolesnikov and Werner \cite{Koleshnikov-Werner}, Feo, Indrei, Posteraro, and Roberto \cite{Feo-Indrei etal}, and    the references therein. 
 
 Our stability result for the Talagrand transport inequality is in the same spirit as Theorem \ref{Theorem-LSI}:

\begin{theorem} \label{Theorem-Talagrand} {\rm (Talagrand transport inequality)} Let ${\rm d}\mu_V=e^{-V}{\rm d}x$  be a probability measure on $\mathbb R^n$ such that $V\in C^2(\mathbb R^n)$ and  $\nabla^2 V -cI_n\geq 0$ for some $c>0.$ The following statements hold$:$ 
	
	\begin{enumerate}
		\item[(i)] {\rm (Stability)} There exists a dimension-depending constant $C_n>0$ with the following property$:$ for every probability measure $\nu\ll \mu_V$ on $\mathbb R^n$ with $\delta^{\sf Tal}_{\mu_V}(\nu)<C_n^{-19},$ there is $x_0\in \mathbb R^n$ such that  
		\begin{equation}\label{deficit-estimate-talagrand}
			\int_{\RR^n}\abs{1-\frac{e^{Lg(x-x_0)+V(x)-V(x-x_0)}}{\int_{\RR^n}e^{Lg}\, {\rm d}\mu_V}}\, {\rm d}\mu_V\le C_n\cdot \left(\delta^{\sf Tal}_{\mu_V}(\nu)\right)^{\frac{1}{19}},
		\end{equation}
		where the function $g:\mathbb R^n\to \mathbb R$ is such that $T(x):=x+\frac{1}{c}\nabla g(x)$ represents the Brenier map pushing forward the measure $\mu_V$ into $\nu $, while $Lg$ is the inf-convolution of $g$ with the  cost $c(x,y)=\frac{c}{2}|x-y|^2$.  
		
		\item[(ii)] {\rm (Equality)} Equality holds in \eqref{Talagrand-inequality} for some probability measure $\nu \ll \mu_V$ on $\mathbb R^n$  if and only if   ${\rm d}\nu=e^{-V(\cdot +x_0)}{\rm d}x$, where  $x_0\in  {\rm Ker}(\nabla^2 V -cI_n)$.

	\end{enumerate}  
\end{theorem}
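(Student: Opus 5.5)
We follow the Maurey-type route of Bobkov and Ledoux, which derives \eqref{Talagrand-inequality} from the Pr\'ekopa--Leindler inequality, and then replace Pr\'ekopa--Leindler by its stability version due to B\"or\"oczky and De \cite{BoroczkyDe}.

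\textbf{Step 1 (Brenier map and the functions $g$, $Lg$).} Let $T(x)=x+\frac1c\nabla g(x)$ be the Brenier map pushing $\mu_V$ onto $\nu$. Then $\frac c2|x|^2+g$ is convex, and Kantorovich duality gives
$$\frac c2W_2^2(\nu,\mu_V)=\int Lg\,{\rm d}\nu-\int g\,{\rm d}\mu_V,\qquad Lg(y)=\inf_x\Bigl\{g(x)+\frac c2|x-y|^2\Bigr\}.$$
Since $g=-\infty$ is not allowed here, $Lg$ is semiconcave, so $y\mapsto \frac c2|y|^2-Lg(y)$ is convex.

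\textbf{Step 2 (Pr\'ekopa--Leindler with $\lambda=1/2$).} Take
$$h_1=e^{Lg-V},\qquad h_2=e^{-g-V},\qquad h_3=e^{-V}.$$
For all $x,y$ we have $Lg(y)-g(x)\le\frac c2|x-y|^2$. Combined with the midpoint inequality $\frac{V(x)+V(y)}{2}-V\bigl(\frac{x+y}2\bigr)\ge\frac c8|x-y|^2$, which follows from $\nabla^2V\ge cI_n$, this gives
$$\sqrt{h_1(y)h_2(x)}\le h_3\Bigl(\frac{x+y}2\Bigr).$$
Hence
$$\Bigl(\int e^{Lg}{\rm d}\mu_V\Bigr)\Bigl(\int e^{-g}{\rm d}\mu_V\Bigr)\le1.$$
Using the entropy duality $\int Lg\,{\rm d}\nu\le D(\nu\|\mu_V)+\log\int e^{Lg}{\rm d}\mu_V$ and Jensen, $-\int g\,{\rm d}\mu_V\le\log\int e^{-g}{\rm d}\mu_V$, we obtain \eqref{Talagrand-inequality}. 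More importantly, keeping track of the losses gives
$$-\log\Bigl[\Bigl(\int e^{Lg}{\rm d}\mu_V\Bigr)\Bigl(\int e^{-g}{\rm d}\mu_V\Bigr)\Bigr]\le\delta^{\sf Tal}_{\mu_V}(\nu).$$
So the Pr\'ekopa--Leindler deficit of the triple $(h_1,h_2,h_3)$ is at most $e^{\delta}-1\lesssim\delta$ once $\delta<C_n^{-19}$.

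\textbf{Step 3 (applying stability).} To apply \cite{BoroczkyDe} we need log-concave functions, so we rewrite the triple relative to the Gaussian $e^{-c|x|^2/2}$. Set
$$\tilde h_1=e^{Lg-\frac c2|\cdot|^2},\qquad \tilde h_2=e^{-g-\frac c2|\cdot|^2}$$
(or their regularisations): $\tilde h_1$ is log-concave by semiconcavity of $Lg$, and $\tilde h_2$ is log-concave by convexity of $\frac c2|x|^2+g$. The remaining part $V-\frac c2|\cdot|^2$ is convex and is absorbed through the midpoint inequality. Then \cite{BoroczkyDe}, after normalising the integrals, yields a translation $x_0$ and an $L^1$-closeness of order $\delta^{1/19}$ (up to $C_n$) between $h_1/\int h_1$ and the translate $h_3(\cdot-x_0)/\int h_3$. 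Written out, this is
$$\int\Bigl|\frac{e^{Lg(x-x_0)-V(x-x_0)}}{\int e^{Lg}{\rm d}\mu_V}-e^{-V(x)}\Bigr|\,{\rm d}x.$$
Changing variables and multiplying inside by $e^{V(x)}$ gives exactly \eqref{deficit-estimate-talagrand}, possibly after renaming $x_0\mapsto -x_0$.

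The \textbf{main obstacle} is this step. We must make sure the B\"or\"oczky--De estimate applies with constants depending only on $n$. This requires checking that normalisation/affine invariance handles the two different integrals $\int h_1$ and $\int h_2$, and that the exponent $1/19$ emerges from their statement. We must also handle the fact that $V$ is not itself quadratic: the strict convexity surplus $V-\frac c2|\cdot|^2$ must be incorporated without destroying the deficit bound. We would first try to apply the stability theorem directly to $(h_1,h_2,h_3)$, which are log-concave by the above, and verify its hypotheses.

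\textbf{Part (ii).} If $\delta=0$, then \eqref{deficit-estimate-talagrand} forces
$$e^{Lg(x-x_0)}=\Bigl(\int e^{Lg}{\rm d}\mu_V\Bigr)e^{V(x-x_0)-V(x)}.$$
Equality must then also hold in Jensen and in the entropy duality. Equality in the duality gives ${\rm d}\nu\propto e^{Lg}{\rm d}\mu_V\propto e^{-V(\cdot-x_0)}{\rm d}x$, and equality in Jensen makes $g$ constant $\mu_V$-a.e. The constancy of $g$ is compatible with $T$ being a translation only if $V(x)-V(x-x_0)$ is affine with slope $c\,x_0$. By Proposition \ref{V-properties} this is equivalent to $x_0\in{\rm Ker}(\nabla^2V-cI_n)$, up to the sign of $x_0$.

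For the converse, if $x_0$ lies in the kernel, then $\nu=\mu_V(\cdot+x_0)$ has $W_2=|x_0|$. Since $V(x+x_0)-V(x)=c\langle x_0,x\rangle+{\rm const}$, a direct computation gives $D(\nu\|\mu_V)=\frac c2|x_0|^2$, so equality holds.
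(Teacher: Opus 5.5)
Your Step 2 is false, and Step 3 and part (ii) both rest on it. With $h_1=e^{Lg-V}$, $h_2=e^{-g-V}$ and weight $\frac12$, the exponent of $\sqrt{h_1(y)h_2(x)}$ satisfies $\frac12\bigl(Lg(y)-g(x)\bigr)-\frac12\bigl(V(x)+V(y)\bigr)\le \frac c4|x-y|^2-\frac c8|x-y|^2-V\bigl(\frac{x+y}2\bigr)$. The midpoint convexity of $V$ pays for only half of the cost, so a surplus $\frac c8|x-y|^2$ remains.

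The product bound you derive from it also fails. Take $V(x)=\frac c2|x|^2+\frac n2\log\frac{2\pi}{c}$ and $g(x)=\langle a,x\rangle$. Then $Lg(y)=\langle a,y\rangle-\frac{|a|^2}{2c}$, so $\bigl(\int e^{Lg}{\rm d}\mu_V\bigr)\bigl(\int e^{-g}{\rm d}\mu_V\bigr)=e^{|a|^2/(2c)}>1$. This $g$ is the potential of the translation $T(x)=x+a/c$, which is an equality case of \eqref{Talagrand-inequality}. So there is no Pr\'ekopa--Leindler triple to feed into B\"or\"oczky--De. The classical fixed-$\lambda=\frac12$ argument only yields \eqref{Talagrand-inequality} with constant $4/c$.

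The missing idea is the Bobkov--Ledoux scaling together with the limit $\lambda\to0^+$. Use $u_\lambda=e^{-\lambda g-V}$, $v_\lambda=e^{(1-\lambda)Lg-V}$, $w=e^{-V}$ with weights $1-\lambda$ and $\lambda$. Then $\lambda(1-\lambda)\bigl(Lg(y)-g(x)\bigr)\le\lambda(1-\lambda)\frac c2|x-y|^2$ is exactly absorbed by \eqref{V-strict-convex} with $s=1-\lambda$.

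\begin{itemize}
\item B\"or\"oczky--De needs only the middle function $w=e^{-V}$ to be log-concave, so your log-concavity worries and the Gaussian rewriting are unnecessary.
\item Proposition \ref{Stability_nonsharp} bounds the $L^1$ distance by $C_n(\varepsilon_\lambda/\lambda)^{1/19}$.
\item The key computation is $\varepsilon_\lambda/\lambda\to\int g\,{\rm d}\mu_V-\log\int e^{Lg}{\rm d}\mu_V$. Kantorovich duality and the entropy duality alone, with no Jensen step, bound this limit by $\delta^{\sf Tal}_{\mu_V}(\nu)$.
\item Passing to the limit uses $u_\lambda\to e^{-V}$ and $v_\lambda\to e^{Lg-V}$, but it also requires the translations $x_0^\lambda$ to stay bounded. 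This is exactly where the hypothesis $\delta^{\sf Tal}_{\mu_V}(\nu)<C_n^{-19}$ is used: escaping translations would keep the $L^1$ distance of order one.
\end{itemize}

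Part (ii) inherits the error. Without the false product bound, $\delta=0$ does not force equality in Jensen. Moreover, ``$g$ constant'' means $T={\rm id}$, i.e.\ $\nu=\mu_V$. That is incompatible with a translation by $x_0\neq0$, and it would exclude genuine extremizers such as Gaussian translates, where $g$ is a nonconstant linear function.

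The correct route is as follows. From \eqref{deficit-estimate-talagrand} with $\delta=0$ and the equality case of Donsker--Varadhan, $\nu$ is a translate of $\mu_V$. The translation is then the optimal map, so $W_2^2(\nu,\mu_V)=|x_0|^2$. A Taylor expansion of $V(x-x_0)-V(x)$, together with $\int\nabla V\,{\rm d}\mu_V=0$, gives $D(\nu\|\mu_V)=\frac c2|x_0|^2+\int_0^1(1-t)\int\langle x_0,(\nabla^2V(x-tx_0)-cI_n)x_0\rangle\,{\rm d}\mu_V\,{\rm d}t$. A vanishing deficit kills this nonnegative remainder, which forces $x_0\in{\rm Ker}(\nabla^2V-cI_n)$. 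Your converse direction is correct.
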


The proof of Theorem \ref{Theorem-Talagrand}  relies again on the stability result of B\"or\"oczky and De \cite{BoroczkyDe} for the Pr\'ekopa--Leindler inequality combined with a Maurey-type argument \`a la  Bobkov and Ledoux \cite{BobkovLedoux}. In addition, stability in  Talagrand's transport inequality requires -- by its very definition --   subtle arguments  from  optimal mass transport theory and  Kantorovich duality theory, as both of them appear in the statement of Theorem \ref{Theorem-Talagrand}/(i). 

The equality case in Talagrand's transport inequality with the Gaussian reference measure has been established, see Cordero-Erausquin \cite{Cordero-ARMA}, Fathi \cite{Fathi}, Otto and Villani \cite{Otto-Villani}, and only  occurs for measures that are translates of the Gaussian measure.  In the case of an arbitrary reference probability measure $\mu_V$ with $V$ satisfying the Bakry--\'Emery property,   the full characterization of the equality case seems to be new, and it is based on \eqref{deficit-estimate-talagrand} and the Donsker--Varadhan variational formula \cite{Donsker-Varadhan}.
Furthermore, we note that within this general framework, the equality does \textit{not} imply that $\mu_V$ is Gaussian, see Bolley, Gentil, and Guillin \cite[Remark 2.7]{BolleyGentilGuillin}, but rather that $\mu_V$ admits a Gaussian factor encapsulated in  the fact that $V$ is quadratic in the directions spanned by the  vectors of ${\rm Ker}(\nabla^2 V -cI_n)$; see   Remark \ref{remark-equal}, as well as Examples \ref{example-1} \& \ref{example-2}. 

 From a conceptual standpoint, the above stability results -- via the  Pr\'ekopa--Leindler stability and  Maurey's argument --  provide a  novel, elementary approach to  characterizing the equality cases  in a broad class of functional inequalities,  
 including log-Sobolev and Talagrand  inequalities.  
As emphasized by Cordero-Erausquin \cite[p.~263]{Cordero-ARMA} and Otto and Villani \cite[p.~390]{Otto-Villani}, the main obstruction to identifying equality cases via purely optimal transport theory  lies in the lack of regularity of the Brenier map and the corresponding Monge--Amp\`ere equation.

%
%
%
%
%
%
%

In the radial case, similar to Theorem \ref{Theorem-LSI-radial}, we have the following optimal stability:

\begin{theorem} \label{Theorem-Talagrand-radial} {\rm (Radial Talagrand transport inequality)} Let ${\rm d}\mu_V=e^{-V}{\rm d}x$  be a probability measure on $\mathbb R^n$ such that $V\in C^2(\mathbb R^n)$ is radial and  $\nabla^2 V -cI_n\geq 0$ for some $c>0.$ The following statements hold$:$ 
	
	\begin{enumerate}
		\item[(i)] {\rm (Stability)} There exists a dimension-depending constant $C_n>0$ with the following property$:$ for every probability measure $\nu\ll \mu_V$ on $\mathbb R^n$ with radial density,  one has 
		\begin{equation}\label{deficit-estimate-talagrand-radial}
			\int_{\RR^n}\abs{1-\frac{e^{Lg(x)}}{\int_{\RR^n}e^{Lg}\, {\rm d}\mu_V}}\, {\rm d}\mu_V\le C_n\cdot \left(\delta^{\sf Tal}_{\mu_V}(\nu)\right)^{\frac{1}{2}},
		\end{equation}
		where $g:\mathbb R^n\to \mathbb R$ is the function appearing in Theorem \ref{Theorem-Talagrand}/$({\rm i})$.

		\item[(ii)] {\rm (Optimality)} The exponent $\frac{1}{2}$ of $\delta^{\sf Tal}_{\mu_V}(\nu)$ in \eqref{deficit-estimate-talagrand-radial} is optimal, in the sense that it cannot be replaced by a larger absolute constant for the family of probability measures  ${\rm d}\mu_V=e^{-V}{\rm d}x$ with $V$ satisfying the above Bakry--\'Emery property. 
	\end{enumerate}  
\end{theorem}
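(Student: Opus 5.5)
Our plan is to follow the Maurey-type route behind Theorem \ref{Theorem-Talagrand}, but to replace the B\"or\"oczky--De stability by the Figalli--Ramos stability for the Pr\'ekopa--Leindler inequality. In the radial setting the latter has linear dependence on the deficit, and no translation parameter appears.

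\textbf{Step 1: a Pr\'ekopa--Leindler triple.} Let $\nu\ll\mu_V$ have radial density. Since $V$ is radial, the Brenier map $T(x)=x+\frac1c\nabla g(x)$ commutes with rotations, so $g$ may be taken radial and, consequently, $Lg$ is radial as well. By Kantorovich duality for the cost $\frac c2|x-y|^2$ and the Donsker--Varadhan formula we have
\begin{equation*}
\frac c2W_2^2(\nu,\mu_V)=\int Lg\,{\rm d}\nu-\int g\,{\rm d}\mu_V \quad\text{and}\quad D(\nu\|\mu_V)\ge \int Lg\,{\rm d}\nu-\log\int e^{Lg}{\rm d}\mu_V .
\end{equation*}
Subtracting gives
\begin{equation*}
\delta^{\sf Tal}_{\mu_V}(\nu)\ge \int g\,{\rm d}\mu_V-\log\int e^{Lg}{\rm d}\mu_V .
\end{equation*}
Next comes Maurey's argument. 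Set $W=V-\frac c2|\cdot|^2$, which is convex. Apply Pr\'ekopa--Leindler with $\lambda=\frac12$ to
\begin{equation*}
h_1=e^{Lg-V},\qquad h_2=e^{-g-V},\qquad h_3=e^{-V}.
\end{equation*}
The inequality $Lg(x)-g(y)\le \frac c2|x-y|^2$, combined with the convexity estimate $V(x)+V(y)-2V(\frac{x+y}2)\ge \frac c4|x-y|^2$, yields $h_3(\frac{x+y}2)\ge \sqrt{h_1(x)h_2(y)}$ after rescaling the middle point. Hence
\begin{equation*}
\int e^{Lg}{\rm d}\mu_V\int e^{-g}{\rm d}\mu_V\le 1 .
\end{equation*}
This is the Bobkov--Ledoux proof. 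Using Jensen's inequality in the form $\int e^{-g}{\rm d}\mu_V\ge e^{-\int g\,{\rm d}\mu_V}$, the quantity
\begin{equation*}
\delta_{PL}:=-\log\Bigl(\int e^{Lg}{\rm d}\mu_V\int e^{-g}{\rm d}\mu_V\Bigr)
\end{equation*}
satisfies $\delta_{PL}\le \delta^{\sf Tal}_{\mu_V}(\nu)$. This also controls the Jensen gap.

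\textbf{Step 2: apply Figalli--Ramos.} All three functions $h_1,h_2,h_3$ are radial, and $h_3$ is log-concave. The Figalli--Ramos stability theorem for radial (or symmetric) functions gives
\begin{equation*}
\int\abs{\frac{h_1}{\int h_1}-\frac{h_3}{\int h_3}}\le C_n\,\delta_{PL}^{1/2}.
\end{equation*}
There is no translate, because the radial extremizers are centered. Since $h_3\,{\rm d}x=\mu_V$, the left-hand side is exactly the left-hand side of \eqref{deficit-estimate-talagrand-radial}. The main obstacle is checking the hypotheses of Figalli--Ramos: integrability, nondegeneracy, and the fact that the rescaled Maurey midpoint inequality matches their normalization with $\lambda=\frac12$. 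If log-concavity of $h_1$ is required, we first reduce to $\delta^{\sf Tal}$ small and use the trivial bound $2$ otherwise. We would also note that $Lg-V$ is concave up to the quadratic term when $g$ is $c$-concave, i.e.\ $\frac c2|x|^2-Lg$ is convex.

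\textbf{Step 3: optimality.} Take $V=V_G$ with $c=1$ in dimension one, or radially in $\mathbb{R}^n$, and the centered Gaussians $\nu_\sigma=N(0,\sigma^2 I)$ with $\sigma\to1$. In this case $g$ is quadratic and
\begin{equation*}
Lg(x)=\frac{\beta}{2}|x|^2+\text{const},\qquad \beta=O(|\sigma-1|).
\end{equation*}
The left-hand side of \eqref{deficit-estimate-talagrand-radial} is an $L^1$ distance between Gaussian-type densities, which is of order $|\sigma-1|$. On the other hand, the explicit formulas
\begin{equation*}
D(\nu_\sigma\|\gamma)=\frac n2(\sigma^2-1-\log\sigma^2),\qquad W_2^2(\nu_\sigma,\gamma)=n(\sigma-1)^2
\end{equation*}
give $\delta^{\sf Tal}_{\gamma}(\nu_\sigma)\asymp (\sigma-1)^2$. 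Therefore any exponent larger than $\frac12$ would fail as $\sigma\to1$.
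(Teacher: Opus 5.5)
Your optimality argument (Step 3) is correct and is essentially the paper's: the paper uses the same centered Gaussian family, the push-forward of $\mu_c$ under $x\mapsto x/\sqrt{1+\epsilon/c}$.

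The gap is in Step 1, and it invalidates Step 2. With $\lambda=\frac12$, the triple $h_1=e^{Lg-V}$, $h_2=e^{-g-V}$, $h_3=e^{-V}$ does \emph{not} satisfy the Prékopa--Leindler hypothesis. From $Lg(x)-g(y)\le\frac c2|x-y|^2$ and \eqref{V-strict-convex} with $s=\frac12$ you only get
\[
\sqrt{h_1(x)h_2(y)}\le e^{\frac c8|x-y|^2}\,h_3\Bigl(\frac{x+y}{2}\Bigr).
\]
The leftover factor depends on $x-y$, not on the midpoint, so no rescaling removes it. At $\lambda=\frac12$ the admissible cost is $\frac c4|x-y|^2$ (Maurey's property $(\tau)$). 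That amounts to working with $g/2$ and $Lg/2$, which controls $e^{Lg/2}$ rather than $e^{Lg}$ and produces a deficit different from the one in \eqref{deficit-estimate-talagrand-radial}.

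Your own Step 3 exhibits the contradiction. For $V=V_G$ and $\nu_\sigma=N(0,\sigma^2I_n)$ you have $g=\frac{\sigma-1}{2}|x|^2$ and $Lg=\frac{\sigma-1}{2\sigma}|x|^2$. Hence $\int e^{Lg}{\rm d}\gamma=\sigma^{n/2}$ and $\int e^{-g}{\rm d}\gamma=\sigma^{-n/2}$, so $\delta_{PL}=0$ for every $\sigma$. Step 2 would then force the left-hand side of \eqref{deficit-estimate-talagrand-radial} to vanish, although it is of order $|\sigma-1|$. For a non-radial translate of the Gaussian, the product $\int e^{Lg}{\rm d}\gamma\int e^{-g}{\rm d}\gamma$ equals $e^{|x_0|^2/2}>1$, so your claimed inequality is false in general.

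The missing idea is that the cost $\frac c2|x-y|^2$ matches only through the one-parameter Maurey family, and the relevant quantities appear only in the limit $\lambda\to0^+$. The paper takes
\[
u_\lambda=e^{-\lambda g-V},\qquad v_\lambda=e^{(1-\lambda)Lg-V},\qquad w=e^{-V}.
\]
Then $u_\lambda(x)^{1-\lambda}v_\lambda(y)^\lambda\le w((1-\lambda)x+\lambda y)$, because the factor $\lambda(1-\lambda)\frac c2|x-y|^2$ appears on both sides.

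These functions are radial and $w$ is log-concave, so Corollary \ref{Stability_sharp-corollary} gives the bound $C_n(\varepsilon_\lambda/\tau_\lambda)^{1/2}$ with $\tau_\lambda=\min(\lambda,1-\lambda)$. One has $\varepsilon_\lambda/\lambda\to\int g\,{\rm d}\mu_V-\log\int e^{Lg}{\rm d}\mu_V\le\delta^{\sf Tal}_{\mu_V}(\nu)$; this is your Kantorovich/Donsker--Varadhan bound, and no Jensen step is needed. Meanwhile $u_\lambda\to e^{-V}$ and $v_\lambda\to e^{Lg-V}$. Letting $\lambda\to0^+$ then yields exactly \eqref{deficit-estimate-talagrand-radial}. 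The explicit $\tau$-dependence in the Figalli--Ramos bound is what keeps this limit meaningful.
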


\subsection{Hypercontractivity estimates for Hopf--Lax semigroups}
 As an application of the above results, we state estimates  for the  hypercontractivity deficit  of the Hopf--Lax semigroup under the Bakry--\'Emery condition. To do this, let ${\rm d}\mu_V=e^{-V}{\rm d}x$  be a probability measure on $\mathbb R^n$ such that $V\in C^2(\mathbb R^n)$ and  $\nabla^2 V -cI_n\geq 0$ for some $c>0.$
For  $t>0$ and  $u:\mathbb R^n\to \mathbb R$, we consider the Hopf--Lax semigroup given by the formula
\begin{equation}\label{inf-convolution-0}
	{\bf Q}_{t}u(x):=\inf_{y\in \mathbb R^n}\left\{u(y)+\frac{|x-y|^2}{2t}\right\}, \ x\in \mathbb R^n;
\end{equation}
by convention, ${\bf Q}_{0}u=u.$ Given a non-decreasing function $q:[0,\infty)\to (0,\infty)$, the hypercontractivity estimate for the Hopf--Lax semigroup -- in the sense of Gross \cite{Gross} -- reads as 
\begin{equation}\label{hypercontractivity-estimate-Gauss-1}
	\|e^{{\bf Q}_tu}\|_{L^{q(t)}(\mathbb R^n,{\rm d}\mu_V)}\leq \|e^{u}\|_{L^{q(0)}(\mathbb R^n,{\rm d}\mu_V)},\ \  t\geq 0.
\end{equation}
Bobkov, Gentil, and Ledoux \cite[Theorem 2.1]{BGL-JMPA}  proved the validity of \eqref{hypercontractivity-estimate-Gauss-1}  for $q(t)=a+ct$ (with $a>0$ arbitrary) and 
any smooth function $u:\mathbb R^n\to \mathbb R$. 

Note that in some cases the estimate \eqref{hypercontractivity-estimate-Gauss-1} becomes trivial. Indeed, for instance, if $u(x)=-|x|^\alpha$ for some $\alpha>2$, it turns out that ${\bf Q}_tu(x)=-\infty$ for every $x\in \mathbb R^n$ and $t>0$, thus the  left-hand side of \eqref{hypercontractivity-estimate-Gauss-1} is zero. In order to overcome such situations, we introduce the class of smooth functions $u:\mathbb R^n\to \mathbb R$ verifying the following property (see also Balogh and Krist\'aly \cite{Balogh-Kristaly-JEMS}): there exist constants $C_1,C_2>0$ and $\theta\in (0,2)$ such that 
\begin{equation}\label{growth-u}
	|u(x)|\leq C_1+C_2|x|^\theta,\ \ x\in \mathbb R^n.	
\end{equation}
Combining condition \eqref{growth-u}  with the super-quadratic feature of the potential $V$ -- which comes from the Bakry--\'Emery property $\nabla^2 V -cI_n\geq 0$ --   a simple argument shows that the terms in \eqref{hypercontractivity-estimate-Gauss-1} are well-defined and non-trivial. 
Having this class of functions,  
we consider the \textit{hypercontractivity deficit} given by 
$$\delta^{\sf HC}_t(u)=\log \frac{\|e^{u}\|_{L^{q(0)}(\mathbb R^n,{\rm d}\mu_V)}}{\|e^{{\bf Q}_tu}\|_{L^{q(t)}(\mathbb R^n,{\rm d}\mu_V)}}\geq 0.$$
We prove the following relationship between the Bakry--\'Emery hypercontractivity and log-Sobolev deficits, which lies at the core of describing the equality case in \eqref{hypercontractivity-estimate-Gauss-1}: 

\begin{theorem}\label{prop-HC-log-Sob}
	Let ${\rm d}\mu_V=e^{-V}{\rm d}x$  be a probability measure on $\mathbb R^n$ such that $V\in C^2(\mathbb R^n)$ and  $\nabla^2 V -cI_n\geq 0$ for some $c>0,$ and let $q:[0,\infty)\to (0,\infty)$ be a non-decreasing $ C^1$-function such that $q'\leq c$. The following statements hold$:$ 
	\begin{itemize}
		\item[(i)]
		For every $t>0$ and every smooth function $u:\mathbb R^n\to \mathbb R$ verifying the growth condition \eqref{growth-u}, one has 
		\begin{equation}\label{HC-LSI-deficits}
			\delta^{\sf HC}_t(u)\geq c\int_0^t\frac{1}{q^2(\tau)}\delta^{\sf LSI}_{\mu_V}(e^{\frac{q(\tau)}{2}{\bf Q}_\tau u}) {\rm d}\tau.
		\end{equation}
		\item [(ii)]	Equality holds in \eqref{hypercontractivity-estimate-Gauss-1} for some $t >0$ and function $u:\mathbb R^n\to \mathbb R$ verifying the growth condition \eqref{growth-u} if and only if there exist  $x_0\in  {\rm Ker}(\nabla^2 V -cI_n)$ and  a number $c_0\in \mathbb R$  such that $u(x)=\langle x_0,x\rangle + c_0,$ $x\in \mathbb R^n $, and for some $a>0$, $q(\tau)=a+c\tau$ for every $\tau\in [0,t )$    whenever $x_0\neq 0.$
	\end{itemize}

\end{theorem}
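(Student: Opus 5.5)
The plan is to run the classical Bobkov--Gentil--Ledoux differentiation argument for the Hopf--Lax semigroup, but to keep track of the exact error term instead of discarding it.

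\textbf{Setup.} Set
$$F(\tau)=\log\|e^{{\bf Q}_\tau u}\|_{L^{q(\tau)}(\mu_V)}=\frac{1}{q(\tau)}\log\int_{\RR^n} e^{q(\tau){\bf Q}_\tau u}\,{\rm d}\mu_V .$$
Then $\delta^{\sf HC}_t(u)=F(0)-F(t)=-\int_0^tF'(\tau)\,{\rm d}\tau$. The growth condition \eqref{growth-u} together with $V(x)\ge \frac c2|x|^2-C$ (a consequence of the Bakry--\'Emery property) gives two things. First, ${\bf Q}_\tau u$ is finite and locally Lipschitz, with $|{\bf Q}_\tau u(x)|\le C(1+|x|^\theta)$ uniformly in $\tau\in[0,t]$. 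Second, all integrals below are finite, so differentiation under the integral sign is justified by dominated convergence. Here one uses that the Hamilton--Jacobi equation $\partial_\tau {\bf Q}_\tau u=-\frac12|\nabla {\bf Q}_\tau u|^2$ holds a.e.

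\textbf{Derivative computation.} Write $g_\tau=e^{\frac{q(\tau)}{2}{\bf Q}_\tau u}$, so that $g_\tau^2=e^{q{\bf Q}_\tau u}$. A direct computation gives
$$F'(\tau)=\frac{q'}{q^2}\,\frac{{\rm Ent}_{\mu_V}(g_\tau^2)}{\int g_\tau^2\,{\rm d}\mu_V}-\frac{1}{2}\,\frac{\int |\nabla {\bf Q}_\tau u|^2 g_\tau^2\,{\rm d}\mu_V}{\int g_\tau^2\,{\rm d}\mu_V}.$$
Since $|\nabla g_\tau|^2=\frac{q^2}{4}|\nabla{\bf Q}_\tau u|^2g_\tau^2$, the second term equals $-\frac{2}{q^2}\int|\nabla g_\tau|^2/\int g_\tau^2$. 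Hence
$$-F'(\tau)=\frac{2}{q^2}\frac{\int|\nabla g_\tau|^2}{\int g_\tau^2}-\frac{q'}{q^2}\frac{{\rm Ent}(g_\tau^2)}{\int g_\tau^2}
=\frac{c}{q^2}\,\delta^{\sf LSI}_{\mu_V}(g_\tau)+\frac{c-q'}{q^2}\,\frac{{\rm Ent}_{\mu_V}(g_\tau^2)}{\int g_\tau^2\,{\rm d}\mu_V}.$$
Because $q'\le c$ and the entropy is nonnegative, the last term is nonnegative. Integrating over $[0,t]$ then yields \eqref{HC-LSI-deficits}, which proves (i).

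\textbf{Equality case (ii).} If $\delta^{\sf HC}_t(u)=0$, the identity above forces, for a.e. $\tau\in(0,t)$, both $\delta^{\sf LSI}_{\mu_V}(g_\tau)=0$ and $(c-q'(\tau)){\rm Ent}(g_\tau^2)=0$. By continuity in $\tau$, we let $\tau\to0^+$.

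1. The characterization of equality in the log-Sobolev inequality (Corollary \ref{corollary-LSI=}, valid without log-concavity) gives $g_\tau=a_\tau e^{\langle y_\tau,x\rangle}$ with $y_\tau\in{\rm Ker}(\nabla^2V-cI_n)$. Thus ${\bf Q}_\tau u$ is affine, with slope $\frac{2}{q(\tau)}y_\tau$.
2. Passing to the limit $\tau\to0$ (${\bf Q}_\tau u\to u$ pointwise) shows that $u(x)=\langle x_0,x\rangle+c_0$. The kernel is a closed subspace, so $x_0\in{\rm Ker}(\nabla^2V-cI_n)$.
3. For this affine $u$ one computes explicitly ${\bf Q}_\tau u=\langle x_0,x\rangle+c_0-\frac{\tau}{2}|x_0|^2$. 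The corresponding function $g_\tau$ is a nonconstant exponential whenever $x_0\neq0$, so its entropy is positive. The second vanishing condition then forces $q'=c$ a.e. on $[0,t)$, i.e. $q(\tau)=a+c\tau$.

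For the converse, take such $u$ and $q$. The term $\delta^{\sf LSI}_{\mu_V}(g_\tau)$ vanishes, by Proposition \ref{V-properties} and the equality characterization. The entropy term vanishes either because $q'=c$, or because $x_0=0$, in which case $u$ is constant and the entropy is zero. Hence $F'\equiv0$ and equality holds.

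\textbf{Main obstacle.} The delicate point is rigor in the derivative computation. ${\bf Q}_\tau u$ is only Lipschitz and semiconcave, and the Hamilton--Jacobi equation holds only almost everywhere, so we must justify the derivative formula for $F$, and its absolute continuity, under the growth condition \eqref{growth-u}. Note that the dominating function $e^{C(1+|x|^\theta)}e^{-V}$ is integrable because $\theta<2$ and $V$ grows at least quadratically. A secondary obstacle is the passage $\tau\to0$ in the rigidity step. This could alternatively be avoided by using the semigroup inversion ${\bf Q}_\tau$ on affine functions together with the fact that ${\bf Q}_\tau u$ affine implies $u$ affine, which follows because $u$ is convex-dominated and ${\bf Q}_\tau$ preserves the set of affine functions.
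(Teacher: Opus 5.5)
Your proposal is correct and follows essentially the paper's proof. Both use the same Hamilton--Jacobi differentiation of $\tau\mapsto\log\|e^{{\bf Q}_\tau u}\|_{L^{q(\tau)}(\mu_V)}$: the nonnegative remainder $\frac{c-q'}{q^2}{\rm Ent}$ is dropped for (i) and exploited for the rigidity of $q$ in (ii), and the LSI equality characterization makes ${\bf Q}_\tau u$ affine for a.e.\ $\tau$ (the paper re-derives Corollary \ref{corollary-LSI=} on the spot via \eqref{relation-deficits} and Theorem \ref{Theorem-Talagrand}/(ii)); the only deviation is that you let $\tau_k\to0^+$ directly, since pointwise limits of affine maps with slopes in the closed subspace ${\rm Ker}(\nabla^2V-cI_n)$ are affine with slope there, which legitimately bypasses the paper's semigroup argument showing the slopes are independent of $\tau$.
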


The paper is organized as follows. In Section \ref{section-2} we recall/state some stability results concerning the Pr\'ekopa--Leindler inequality and basic properties of potentials $V:\mathbb R^n\to \mathbb R$ satisfying the Bakry--\'Emery property. In Section  \ref{section-3} we deal with the Bakry--\'Emery  log-Sobolev inequality by proving  Theorems \ref{Theorem-LSI} and \ref{Theorem-LSI-radial}. In a similar manner, in Section \ref{section-4} we treat the Talagrand transport inequality, providing the  proofs of Theorems \ref{Theorem-Talagrand} and \ref{Theorem-Talagrand-radial}. Finally, in Section \ref{section-5} we prove Theorem \ref{prop-HC-log-Sob}.

\section{Preliminaries}\label{section-2}

\subsection{Stability in the Pr\'ekopa--Leindler inequality}

Recall the following stability result for the Pr\'ekopa--Leindler inequality. 

\begin{theorem}[B\"or\"oczky and De  \cite{BoroczkyDe}] \label{Theorem_Boroczky-De}
	For some absolute constant $c>1$, if $\tau\in (0, \frac{1}{2}]$, $\tau\le \lambda\le 1-\lambda$, $w, u, v:\RR^n\to [0, \infty)$ are integrable functions such that $$w((1-\lambda)x+\lambda y)\ge u(x)^{1-\lambda} v(y)^{\lambda}, \qquad \forall x, y\in \RR^n,$$ $w$ is log-concave, and for some $\varepsilon \in (0, 1]$, $$\int_{\RR^n} w(x)\, {\rm d}x\le (1+\varepsilon)\left(\int_{\RR^n}u(x)\, {\rm d}x\right)^{1-\lambda}\left(\int_{\RR^n} v(x)\, {\rm d}x\right)^{\lambda},$$  then there exists $x_0\in \RR^n$ such that, setting $a=\int_{\RR^n}v/\int_{\RR^n} u$, we have \begin{equation}\label{BDe1}
		\int_{\RR^n}\abs{u(x)-a^{-\lambda}w(x-\lambda x_0)}\, {\rm d}x\le c^n n^n \left(\frac{\varepsilon}{\tau}\right)^{\frac{1}{19}}\int_{\RR^n}u(x) {\rm d}x,    
	\end{equation}
 \begin{equation}\label{BDe2}
		\int_{\RR^n} \abs{v(x)-a^{1-\lambda}w(x+(1-\lambda)x_0)}\, {\rm d}x \le c^n n^n \left(\frac{\varepsilon}{\tau}\right)^{\frac{1}{19}}\int_{\RR^n}v(x) {\rm d}x.
	\end{equation}
\end{theorem}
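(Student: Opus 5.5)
This result is quoted from \cite{BoroczkyDe} and is used below as a black box. If I had to prove it, I would reduce it to the case where all three functions are log-concave, and then treat that case by passing to convex bodies and using quantitative Brunn--Minkowski stability. Throughout, I normalize $\int u=1$, which I may do by scaling $u,v,w$ appropriately; translating $u$ and $v$ then only shifts $x_0$.

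\emph{Step 1: reduction to log-concave $u,v$.} Let $\tilde u,\tilde v$ be the least log-concave majorants of $u,v$, truncated where necessary to keep them integrable.
\begin{itemize}
\item Since $w$ is log-concave and dominates the $\lambda$-sup-convolution of $u$ and $v$, it also dominates the $\lambda$-sup-convolution of $\tilde u$ and $\tilde v$. This uses that the logarithm of the sup-convolution of log-concave hulls is the hull of the sup-convolution of the logarithms.
\item Applying Pr\'ekopa--Leindler to $(\tilde u,\tilde v,w)$ gives $(\int\tilde u)^{1-\lambda}(\int\tilde v)^\lambda\le\int w\le(1+\varepsilon)(\int u)^{1-\lambda}(\int v)^\lambda$.
\item Since $\tilde u\ge u$ and $\tilde v\ge v$, this yields $\int(\tilde u-u)\lesssim\varepsilon/\lambda$ and $\int(\tilde v-v)\lesssim\varepsilon/(1-\lambda)$. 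This is where the dependence on $\tau$ enters.
\end{itemize}
So $u$ and $v$ are $L^1$-close to log-concave functions, and it suffices to prove \eqref{BDe1}--\eqref{BDe2} for $\tilde u,\tilde v$ with a slightly worse $\varepsilon$.

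\emph{Step 2: the log-concave case.} For log-concave $u,v,w$, I would lift each function $h$ to the convex body
\[
K_h^m=\{(x,y)\in\RR^n\times\RR^m:\ |y|\le m^{-1/2}\cdots h(x)^{1/m}\}.
\]
As $m\to\infty$, the volume of $K_h^m$ recovers $\int h$ up to normalization, and the functional inequality becomes Brunn--Minkowski for these bodies. This is the Ball--B\"or\"oczky strategy. The dimension $n+m$ grows, so a dimension-dependent stability estimate such as that of Figalli--Maggi--Pratelli cannot be applied directly. Instead I would work level set by level set, in the superlevel-set form of Pr\'ekopa--Leindler:
\begin{itemize}
\item For $t>0$, write $\{w\ge t\}\supset(1-\lambda)\{u\ge s\}+\lambda\{v\ge r\}$ whenever $s^{1-\lambda}r^\lambda=t$.
\item A small deficit in the integrals forces, for most levels in measure, a small Brunn--Minkowski deficit between the convex level sets.
\item Quantitative Brunn--Minkowski stability for convex bodies, in the form with symmetric-difference control, then shows that these level sets are close to homothetic copies $K,\ K+\lambda x_0$ of one another.
\end{itemize}

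\emph{Step 3: uniform translation and reassembly.} Integrating over levels gives the $L^1$ estimates for the functions, with a translation $x_0$ that a priori depends on the level. Making $x_0$ uniform in the level, and controlling the tail levels where the convex sets degenerate, is what I expect to be the main obstacle. The first thing I would try is to fix $x_0$ from a single central level, say the median level of $w$. Log-concavity then makes the level sets nested convex sets with centroids moving in a controlled way, which would let me compare other levels to this one. Each interpolation step (deficit to level-set deficit, level-set deficit to translate closeness, closeness of centroids) costs a power of $\varepsilon$, and chaining these losses is what produces the poor exponent $1/19$ and the constant $c^nn^n$.
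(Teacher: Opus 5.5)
The paper gives no proof of this statement. It is quoted from B\"or\"oczky--De \cite{BoroczkyDe} and used as a black box, so your argument has to stand on its own, and at present it is an outline rather than a proof.

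Your Step~1 is essentially sound. Log-concave hulls commute with the $\lambda$-sup-convolution, because convex hulls of hypographs commute with Minkowski combinations. Pr\'ekopa--Leindler then makes $\tilde u,\tilde v$ automatically integrable, so no truncation is needed, and $(\tilde u,\tilde v,w)$ still has deficit at most $\varepsilon$. Your two bounds are swapped, though. The exponent $1-\lambda$ sits on $\int u$, so
$\int(\tilde u-u)\le\bigl((1+\varepsilon)^{1/(1-\lambda)}-1\bigr)\int u$ and $\int(\tilde v-v)\le\bigl((1+\varepsilon)^{1/\lambda}-1\bigr)\int v$.
The latter is $O(\varepsilon/\lambda)\int v$ only when $\varepsilon\le\lambda$. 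This is harmless, because the statement is trivial once $\varepsilon/\tau$ is of order one.

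The genuine gap is in Step~2, and Step~3 is left open by your own account.

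\emph{Step~2.} Nothing supports the claim that a small integral deficit forces small Brunn--Minkowski deficits for most levels. Brunn--Minkowski applied at matched levels does not even reproduce Pr\'ekopa--Leindler when $n\ge2$. Take $u=\mathbf{1}_K$ with $K$ a centered ball of volume one, and $v(x)=e^{-|x|/\beta}$ normalized so that $\int v=1$. All level sets are concentric balls, so the Brunn--Minkowski deficit vanishes at every level. Yet
$\int_0^1\bigl|(1-\lambda)\{u\ge t\}+\lambda\{v\ge t\}\bigr|\,{\rm d}t=\int_0^\infty\bigl((1-\lambda)+\lambda(n!)^{-1/n}s\bigr)^ne^{-s}\,{\rm d}s$,
and this is strictly less than $1=(\int u)^{1-\lambda}(\int v)^\lambda$, since $k!<(n!)^{k/n}$ for $0<k<n$. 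So the integral deficit gives no level-by-level control through this chain. The slack that Pr\'ekopa--Leindler exploits comes from pairing \emph{different} levels $s^{1-\lambda}r^\lambda=t$, and you never say which pairs carry the deficit.

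One workable way to organize this is to split Pr\'ekopa--Leindler into two pieces:
\begin{itemize}
\item multiplicative Brunn--Minkowski for the pair $\{u\ge e^{-\sigma_1}\}$, $\{v\ge e^{-\sigma_2}\}$;
\item a one-dimensional Pr\'ekopa--Leindler inequality in $\sigma$ for $\sigma\mapsto e^{-\sigma}|\{u\ge e^{-\sigma}\}|$ and its analogues for $v$ and $w$.
\end{itemize}
A stability version of the one-dimensional piece would identify the relevant pairing and the volume ratios.

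You need that volume information in any case. Brunn--Minkowski stability only gives closeness to \emph{homothetic} copies, not to $K$ and $K+\lambda x_0$. For example, Gaussians of different variances have homothetic level sets at every level but are far from being translates.

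\emph{Step~3.} Choosing a single $x_0$ for all levels and controlling the degenerate tail levels is exactly the hard part, and you leave it open. The exponent $1/19$ and the constant $c^nn^n$ are asserted rather than derived.

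A minor point: for merely log-concave $h$, the lift $\{|y|\le h(x)^{1/m}\}$ is not convex. One needs the approximation $(1-\varphi/m)_+^m$ of $h=e^{-\varphi}$ instead.
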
 

The main ingredient in obtaining our stability results in both the Bakry--\'Emery log-Sobolev  and Talagrand inequalities will be the following result; see Machado and Ramos \cite[Lemma 2.7]{MR} for the $\lambda=1/2$ case. 

\begin{proposition}\label{Stability_nonsharp}
	Let $u, v, w:\RR^n\to [0, \infty)$ be integrable functions satisfying 
	$$w((1-\lambda)x+\lambda y)\ge u(x)^{1-\lambda} v(y)^{\lambda}, \qquad \forall x, y\in \RR^n,$$ for some $\lambda\in (0, 1)$. Suppose that $w$ is log-concave. If there is some $\varepsilon>0$ such that $$\int_{\RR^n} w(x) \, {\rm d}x\le (1+\varepsilon)\left(\int_{\RR^n} u(x) \, {\rm d}x\right)^{1-\lambda} \left(\int_{\RR^n} v(x) \, {\rm d}x\right)^{\lambda},$$ then there exist $x_0\in \RR^n$ and a dimensional constant $C_n>0$ such that $$\int_{\RR^n} \abs{\frac{u(x)}{\int_{\RR^n}u}-\frac{v(x-x_0)}{\int_{\RR^n} v}}\, {\rm d}x\le C_n \left(\frac{\varepsilon}{\tau}\right)^{\frac{1}{19}},$$ where $\tau=\min(\lambda, 1-\lambda)$. 
\end{proposition}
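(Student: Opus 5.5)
The plan is to derive Proposition \ref{Stability_nonsharp} directly from Theorem \ref{Theorem_Boroczky-De}. Three things need attention: the symmetric roles of $\lambda$ and $1-\lambda$, the restriction $\varepsilon\le 1$, and the normalizations of $u,v$.

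First, a harmless reduction. If $\lambda>1/2$, swap $(u,v,\lambda)$ with $(v,u,1-\lambda)$. The hypothesis $w((1-\lambda)x+\lambda y)\ge u(x)^{1-\lambda}v(y)^\lambda$ is symmetric under this swap, and so is $\tau=\min(\lambda,1-\lambda)$. Moreover, a bound for $\|u/\!\int u - v(\cdot-x_0)/\!\int v\|_{L^1}$ is equivalent, by translating the integration variable, to a bound for $\|v/\!\int v-u(\cdot+x_0)/\!\int u\|_{L^1}$. So we may assume $\lambda\le 1/2$, which gives $\tau=\lambda\in(0,1/2]$ and $\tau\le\lambda\le 1-\lambda$, exactly as Theorem \ref{Theorem_Boroczky-De} requires.

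Next, dispose of large $\varepsilon$. If $\varepsilon>1$, or more generally if $(\varepsilon/\tau)^{1/19}\ge 1$, the left-hand side is at most $2$ by the triangle inequality, since both terms are probability densities. The claim then holds trivially once $C_n\ge 2$. So assume $\varepsilon\le 1$. We may also assume $\int u,\int v\in(0,\infty)$, since otherwise the statement is vacuous or degenerate.

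Now apply Theorem \ref{Theorem_Boroczky-De} to get $x_0$ satisfying \eqref{BDe1} and \eqref{BDe2}. Set $a=\int v/\int u$ and $\eta=c^nn^n(\varepsilon/\tau)^{1/19}$. Dividing \eqref{BDe1} by $\int u$ gives
\[
\left\| \frac{u}{\int u}-\frac{a^{-\lambda}w(\cdot-\lambda x_0)}{\int u}\right\|_{L^1}\le\eta .
\]
Translating \eqref{BDe2} by $x\mapsto x-x_0$ and dividing by $\int v$ gives
\[
\left\|\frac{v(\cdot-x_0)}{\int v}-\frac{a^{1-\lambda}w(\cdot-\lambda x_0)}{\int v}\right\|_{L^1}\le\eta .
\]
The key observation is that the two comparison functions coincide: since $\int v=a\int u$,
\[
\frac{a^{1-\lambda}}{\int v}=\frac{a^{1-\lambda}}{a\int u}=\frac{a^{-\lambda}}{\int u}.
\]
Both expressions are therefore the same translate $\frac{a^{-\lambda}}{\int u}\,w(\cdot-\lambda x_0)$. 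The triangle inequality then yields the bound $2\eta$, so $C_n=\max(2,\,2c^nn^n)$ works.

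The main step to check carefully is this matching of the shifts and constants. One must verify that the shift $-\lambda x_0$ in \eqref{BDe1} and the shift $(1-\lambda)x_0$ in \eqref{BDe2} differ by exactly $x_0$, so that the translate of $v$ lands on the same profile. One must also track how the swap in the reduction step changes $x_0$ into $-x_0$. Everything else is bookkeeping.
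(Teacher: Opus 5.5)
Your proposal is correct and follows essentially the same route as the paper. You apply Theorem \ref{Theorem_Boroczky-De}, translate \eqref{BDe2} by $x_0$ so that both estimates are measured against the same profile $a^{1-\lambda}w(\cdot-\lambda x_0)$, and use the triangle inequality to obtain the constant $2c^nn^n$; the only cosmetic difference is that you normalize by $\int u$, $\int v$ before the triangle inequality, whereas the paper divides by $\int v$ afterwards.

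Your preliminary reductions are valid. Swapping $(u,v,\lambda)\leftrightarrow(v,u,1-\lambda)$ gives $\lambda\le 1/2$, and you dispose of $\varepsilon>1$ through the trivial bound $2$. These steps take care of hypotheses of Theorem \ref{Theorem_Boroczky-De} as stated ($\lambda\le 1-\lambda$ and $\varepsilon\le 1$) that the paper's proof passes over silently.
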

\begin{proof}
	Note that $\tau\in (0, \frac{1}{2}]$ since $\lambda\in (0, 1)$. Moreover, it is clear that $\tau\le \lambda \le 1-\tau$. Applying Theorem \ref{Theorem_Boroczky-De}, there exists $x_0\in \RR^n$ such that \eqref{BDe1} and \eqref{BDe2} hold.

	Using the triangle inequality and making a change of variables, we have 
	\begin{align*}
		\int_{\RR^n} \abs{a u(x)-v(x-x_0)}\, {\rm d}x&\le \int_{\RR^n} \abs{a u(x)-a^{1-\lambda}w(x-\lambda x_0)}\, {\rm d}x + \int_{\RR^n} \abs{a^{1-\lambda}w(x-\lambda x_0)-v(x-x_0)}\, {\rm d}x\\
		&=a\int_{\RR^n} \abs{u(x)-a^{-\lambda}w(x-\lambda x_0)}\, {\rm d}x+\int_{\RR^n}\abs{v(x)-a^{1-\lambda}w(x+(1-\lambda)x_0)}\, {\rm d}x.
	\end{align*}
	Now \eqref{BDe1} and \eqref{BDe2} yield $$\int_{\RR^n} \abs{au(x)-v(x-x_0)}\, {\rm d}x\le c^n n^n \left(\frac{\varepsilon}{\tau}\right)^{\frac{1}{19}}\left(a \int_{\RR^n} u(x)\, {\rm d}x+\int_{\RR^n} v(x)\, {\rm d}x\right)=2c^n n^n \left(\frac{\varepsilon}{\tau}\right)^{\frac{1}{19}}\left(\int_{\RR^n} v(x)\, {\rm d}x\right).$$
	Dividing both sides of the above inequality by $\displaystyle \int_{\RR^n} v(x) \, {\rm d}x$ and denoting $C_n:=2c^n n^n>0$, the desired inequality follows. 
\end{proof}

Now recall that in the radial case we have the following improved stability result due to Figalli and Ramos \cite{FigalliRamos}.

\begin{theorem}[Figalli and Ramos \cite{FigalliRamos}] \label{Theorem_Figalli_Ramos}
	For every $n \in \mathbb{N}$, there exists a dimensional  constant $ C(n) >0$ with the following property.
	Let $0<\lambda<1$, and let $u, v, w: \mathbb{R}^n \rightarrow \mathbb{R}_{+}$ be radially symmetric functions such that  either $w$ is log-concave, or both $u$ and $v$ are log-concave. Furthermore, suppose that 
	$$w((1-\lambda)x+\lambda y)\ge u(x)^{1-\lambda} v(y)^{\lambda}, \qquad \forall x, y\in \RR^n,$$ and  for some $\varepsilon\in (0,1],$ we have $$\int_{\RR^n} w(x)\, {\rm d}x=(1+\varepsilon)\left(\int_{\RR^n} u(x)\, {\rm d}x\right)^{1-\lambda} \left(\int_{\RR^n} v(x)\, {\rm d}x\right)^{\lambda}.$$ 
	Then there exist a radially symmetric log-concave function $h: \mathbb{R}^n \rightarrow \mathbb{R}_{+}$ such that
	$$
	\begin{aligned}
		\int_{\R^n}| u(x) - a^{-\lambda}  h\left(x\right)| \mathrm{d} x & \leq C(n)\left(\frac{\varepsilon}{\tau}\right)^{1 / 2} \int_{\mathbb{R}} u(x) \mathrm{d} x, \\
		\int_{\R^n}|v(x)- a^{1-\lambda}h\left(x\right)| \mathrm{d} x & \leq C(n)\left(\frac{\varepsilon}{\tau}\right)^{1 / 2} \int_{\mathbb{R}} v(x)  \mathrm{d} x, \\
		\int_{\mathbb{R}}|w(x)-h(x)|  \mathrm{d} x & \leq C(n)\left(\frac{\varepsilon}{\tau}\right)^{1 / 2} \int_{\R^n} w(x)  \mathrm{d} x,
	\end{aligned}
	$$
	where $a = \int_{\R^n} v/\int_{\R^n} u   $ and $\tau= \min(\lambda, 1-\lambda)$. 
\end{theorem}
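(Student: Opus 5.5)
The plan is to reduce everything to a one-dimensional Prékopa--Leindler problem on the half-line, where a transport argument gives the square-root rate directly. Write $u(x)=U(|x|)$, $v(x)=V(|x|)$, $w(x)=W(|x|)$. Testing the hypothesis on collinear points $x=r\theta$, $y=s\theta$ with $r,s\ge 0$ gives $W((1-\lambda)r+\lambda s)\ge U(r)^{1-\lambda}V(s)^\lambda$. By the AM--GM inequality, $((1-\lambda)r+\lambda s)^{n-1}\ge r^{(n-1)(1-\lambda)}s^{(n-1)\lambda}$. Hence the weighted profiles
$$\tilde u(r)=U(r)r^{n-1},\quad \tilde v(r)=V(r)r^{n-1},\quad \tilde w(r)=W(r)r^{n-1}$$
satisfy the one-dimensional Prékopa--Leindler hypothesis on $[0,\infty)$. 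Their integrals equal those of $u,v,w$ divided by $|\mathbb S^{n-1}|$, so the one-dimensional deficit is still $\varepsilon$. Moreover, $\tilde w$ is log-concave whenever $W$ is on the half-line, and likewise for $\tilde u,\tilde v$ in the other alternative.

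For the one-dimensional problem I would run the classical transport proof quantitatively. Normalize so that $\int\tilde u=\int\tilde v=1$; this costs only the factor $a$, exactly as in Proposition \ref{Stability_nonsharp}. Let $T,S$ be the monotone maps pushing a reference density (Lebesgue measure on $[0,1]$) to $\tilde u,\tilde v$, and set $R=(1-\lambda)T+\lambda S$. Then
$$\int\tilde w\ge\int_0^1 \tilde w(R)R'\ge\int_0^1 \tilde u(T)^{1-\lambda}\tilde v(S)^{\lambda}\,(1-\lambda)T'+\lambda S'.$$
The refined AM--GM inequality $(1-\lambda)a+\lambda b\ge a^{1-\lambda}b^\lambda\bigl(1+c\tau\min\{(\log a-\log b)^2,1\}\bigr)$ then yields
$$\int_0^1\min\{(\log T'-\log S')^2,1\}\lesssim \varepsilon/\tau .$$
By Cauchy--Schwarz, $T'/S'$ is close to $1$ in $L^1$ at rate $(\varepsilon/\tau)^{1/2}$. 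This is precisely where the exponent $1/2$ comes from.

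Next I would integrate this to show that $T-S$ is close to a constant shift $x_0$, which in turn gives $\tilde u$ close to $\tilde v(\cdot-x_0)$ and both close to $\tilde w$ suitably rescaled. Here the radial structure is essential. Both supports start at $r=0$, and the lower tails near the origin are controlled by the weight $r^{n-1}$ together with log-concavity, so the shift must satisfy $|x_0|\lesssim(\varepsilon/\tau)^{1/2}$ in the natural scale and can be absorbed into the error. I would take $h$ to be the radial function with profile $W$, or its log-concave hull if only $u,v$ are log-concave. The three estimates then follow by undoing the normalization and passing back to $\mathbb R^n$ via polar coordinates.

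I expect the main obstacle to be the passage from the $L^2$-type control of $\log T'-\log S'$ to $L^1$ closeness of the densities with a dimension-only constant. The truncation $\min\{\cdot,1\}$ and the regions where $T'/S'$ is far from $1$ must be handled using log-concavity: exponential tails, and a comparison of the masses of $\tilde u,\tilde v,\tilde w$ on intervals. Controlling the shift near $r=0$ uniformly in $\lambda$ is part of the same difficulty. In the case where only $u,v$ are log-concave, I would first reduce to a log-concave $w$ by replacing $w$ with the (smaller) sup-convolution $\sup\{u(x)^{1-\lambda}v(y)^\lambda : z=(1-\lambda)x+\lambda y\}$. This function is log-concave, satisfies the hypothesis, and has no larger integral, so the deficit can only decrease.
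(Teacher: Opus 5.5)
The paper does not prove this statement. It is quoted from Figalli and Ramos and used only as a black box (to get Corollary~\ref{Stability_sharp-corollary}). Judged on its own, your proposal has a genuine gap, in the step you treat most briefly: ruling out the shift $x_0$.

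You use $((1-\lambda)r+\lambda s)^{n-1}\ge r^{(n-1)(1-\lambda)}s^{(n-1)\lambda}$ merely as an inequality. That discards the only feature distinguishing the radial $n$-dimensional problem from a one-dimensional Pr\'ekopa--Leindler problem on the half-line, and the latter has translations in $r$ as (near-)equality cases. Your substitute ("both supports start at $r=0$, and the weight plus log-concavity controls the lower tails") does not deliver the rate, for two reasons:
\begin{itemize}
\item When only $w$ is log-concave, $u$ and $v$ are arbitrary radial functions, so nothing forces their profiles to live near $r=0$.
\item Even when the supports do start at $0$, the weight makes the lower tail thin. $\tilde u$ carries mass only of order $(x_0/L)^n$ on $[0,x_0]$, with $L$ the natural scale. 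So an $L^1$ error $\delta$ yields only $x_0/L$ of order $\delta^{1/n}$, and the exponent degrades to about $1/(2n)$.
\end{itemize}

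This is not just a lossy chain of estimates: the reduced data do not determine the answer. Take $\lambda=\frac12$, $\tilde h(r)=r^{n-1}e^{-r}/(n-1)!$, small $x_0>0$, and
\[
\tilde w=\tilde h,\qquad \tilde u(r)=\tilde h\bigl(r-\tfrac{x_0}{2}\bigr)\mathbf{1}_{\{r\ge x_0/2\}},\qquad \tilde v(r)=\tilde h\bigl(r+\tfrac{x_0}{2}\bigr),\quad r\ge0.
\]
Then:
\begin{itemize}
\item Log-concavity of $\tilde h$ gives the one-dimensional hypothesis.
\item $\tilde w=Wr^{n-1}$ with $W(r)=e^{-r}/(n-1)!$, so $w$ is radial and log-concave.
\item The deficit is $(1-\eta)^{-1/2}-1$ with $\eta=\int_0^{x_0/2}\tilde h$, which is of order $x_0^n$.
\item Yet $\|\tilde u-\tilde v/(1-\eta)\|_{L^1(0,\infty)}$ is of order $x_0$.
\end{itemize}
So for $n\ge3$, no argument using only what your reduction retains can give the exponent $\frac12$.

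Such triples violate the genuine $n$-dimensional hypothesis, which points to the missing idea: keep the strict AM--GM gain in the weight. In the transport step, use $W(R)\ge U(T)^{1-\lambda}V(S)^\lambda$ together with
\[
R^{n-1}\ge T^{(n-1)(1-\lambda)}S^{(n-1)\lambda}\bigl(1+c\tau\min\{(\log T-\log S)^2,1\}\bigr)^{n-1}.
\]
Then the deficit also bounds $(n-1)\int_0^1\min\{(\log T-\log S)^2,1\}\,dt$ by a multiple of $\varepsilon/\tau$. A shift $T\approx S+x_0$ costs about $x_0^2\int_0^\infty\tilde v(r)r^{-2}\,dr$, which is of order $(x_0/L)^2$ for $n\ge3$. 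This is the mechanism that pins $x_0$ at the rate $(\varepsilon/\tau)^{1/2}$.

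Separately, the step from the truncated $L^2$ bound to $L^1$ closeness remains open in your sketch. On the exceptional set where $|\log(T'/S')|>1$ (of measure of order $\varepsilon/\tau$), Cauchy--Schwarz gives no control of $T'/S'-1$. Also, integrating $T'-S'=S'(T'/S'-1)$ to control $T-S$ involves the weight $S'$, which is unbounded in the tails.
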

Using Theorem \ref{Theorem_Figalli_Ramos}, the same strategy we used to deduce Proposition \ref{Stability_nonsharp} from Theorem \ref{Theorem_Boroczky-De} yields the following result. 
\begin{corollary}\label{Stability_sharp-corollary}
	Under the same assumptions as in Theorem \ref{Theorem_Figalli_Ramos}, there exists a dimension-depending constant $C_n>0$ such that $$\int_{\RR^n} \abs{\frac{u(x)}{\int_{\RR^n} u}-\frac{v(x)}{\int_{\RR^n}v}}\, {\rm d}x\le C_n \left(\frac{\varepsilon}{\tau}\right)^{\frac{1}{2}}.$$
\end{corollary}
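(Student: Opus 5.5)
The plan is to copy the proof of Proposition \ref{Stability_nonsharp} verbatim, using Theorem \ref{Theorem_Figalli_Ramos} in place of Theorem \ref{Theorem_Boroczky-De}. The one change is that no translation appears, because everything is radial and the approximating function $h$ is centered. Note that Theorem \ref{Theorem_Figalli_Ramos} is stated for $\varepsilon\in(0,1]$ with equality of $\int w$ and $(1+\varepsilon)(\int u)^{1-\lambda}(\int v)^{\lambda}$. These are exactly the hypotheses we inherit, so no reduction is needed.

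First, apply Theorem \ref{Theorem_Figalli_Ramos} to obtain a radial log-concave $h$ satisfying the three estimates, with $a=\int v/\int u$. Only the first two estimates will be used. Next, insert $a^{1-\lambda}h$ and use the triangle inequality:
\begin{align*}
\int_{\RR^n}\abs{a u(x)-v(x)}\,{\rm d}x&\le a\int_{\RR^n}\abs{u(x)-a^{-\lambda}h(x)}\,{\rm d}x+\int_{\RR^n}\abs{a^{1-\lambda}h(x)-v(x)}\,{\rm d}x\\
&\le C(n)\left(\frac{\varepsilon}{\tau}\right)^{\frac12}\left(a\int_{\RR^n}u+\int_{\RR^n}v\right)=2C(n)\left(\frac{\varepsilon}{\tau}\right)^{\frac12}\int_{\RR^n}v .
\end{align*}
Here we used $a\,a^{-\lambda}=a^{1-\lambda}$ and $a\int u=\int v$.

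Finally, divide by $\int v$. The left-hand side becomes $\int\abs{\frac{u}{\int u}-\frac{v}{\int v}}$, since $a/\int v=1/\int u$. This gives the claim with $C_n:=2C(n)$.

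There is no real obstacle. The only care needed is the bookkeeping of the powers of $a$, and noting that the domains of integration in Theorem \ref{Theorem_Figalli_Ramos} (written partly as $\mathbb R$) are to be read as $\RR^n$.
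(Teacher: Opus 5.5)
Your proposal is correct and matches the paper's argument: the paper obtains the corollary by rerunning the proof of Proposition \ref{Stability_nonsharp} with Theorem \ref{Theorem_Figalli_Ramos} in place of Theorem \ref{Theorem_Boroczky-De}. That is, it inserts $a^{1-\lambda}h$ (no translation is needed, since $h$ is radial), applies the triangle inequality, and divides by $\int v$, which gives $C_n=2C(n)$.
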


\subsection{Bakry--\'Emery potential} 
Let $V\in C^2(\mathbb R^n)$ be such that   $\nabla^2 V -cI_n\geq 0$ for some $c>0.$ It is clear that this property is equivalent to \begin{equation}\label{V-strict-convex}
	 sV(x)+(1-s)V(y)-V(sx+(1-s)y)\geq \frac{c{s(1-s)}}{2}|x-y|^2,\quad \forall x,y\in \mathbb R^n,s\in [0,1].
\end{equation}
The following properties of $V$ will be useful in our proofs: 

\begin{proposition}\label{V-properties}
	Let $V\in C^2(\mathbb R^n)$ be such that   $\nabla^2 V -cI_n\geq 0$ for some $c>0,$ and let $x_0\in \mathbb R^n.$ Then the following statements are equivalent$:$
	\begin{itemize}
		\item[(i)] $x_0\in  {\rm Ker}(\nabla^2 V -cI_n);$
		\item[(ii)] $V(x+x_0)-V(x )=c\langle x_0,x\rangle +V(x_0)-V(0)$ for every $x\in \mathbb R^n;$
		\item[(iii)] $V(x+x_0)-V(x )=\langle \nabla V(x),x_0\rangle +\frac{c}{2}|x_0|^2$ for every $x\in \mathbb R^n.$
	\end{itemize}
\end{proposition}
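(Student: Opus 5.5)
The plan is to prove the cycle (i) $\Rightarrow$ (ii) $\Rightarrow$ (iii) $\Rightarrow$ (i). Throughout, set $\varphi(x):=V(x+x_0)-V(x)-c\langle x_0,x\rangle$. Then $\nabla\varphi(x)=\nabla V(x+x_0)-\nabla V(x)-cx_0$, and by the fundamental theorem of calculus
$$\nabla\varphi(x)=\int_0^1\big(\nabla^2V(x+sx_0)-cI_n\big)x_0\,{\rm d}s.$$

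\emph{(i) $\Rightarrow$ (ii).} Assume $x_0\in{\rm Ker}(\nabla^2V-cI_n)$. Then $(\nabla^2V(y)-cI_n)x_0=0$ for every $y$, so the integrand above vanishes and $\nabla\varphi\equiv0$. Hence $\varphi$ is constant, and its value is $\varphi(0)=V(x_0)-V(0)$. This is exactly (ii).

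\emph{(ii) $\Rightarrow$ (iii).} Differentiating (ii) gives $\nabla V(x+x_0)-\nabla V(x)=cx_0$ for all $x$. Taylor's formula along the segment then yields
$$V(x+x_0)-V(x)=\int_0^1\langle\nabla V(x+sx_0),x_0\rangle\,{\rm d}s .$$
Along the line $s\mapsto x+sx_0$ the function is convex with second derivative at least $c|x_0|^2$. Since its total derivative increase over $[0,1]$ is $\langle \nabla V(x+x_0)-\nabla V(x),x_0\rangle=c|x_0|^2$, equal to the minimum possible, the second derivative must equal $c|x_0|^2$ identically on $[0,1]$. So $\langle\nabla V(x+sx_0),x_0\rangle=\langle\nabla V(x),x_0\rangle+sc|x_0|^2$, and integrating gives (iii).

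\emph{(iii) $\Rightarrow$ (i).} This is the main step. Write $h(s)=V(x+sx_0)$. Then (iii) says $h(1)-h(0)-h'(0)=\frac c2|x_0|^2$. Since $h''(s)=\langle\nabla^2V(x+sx_0)x_0,x_0\rangle\ge c|x_0|^2$, we have
$$h(1)-h(0)-h'(0)=\int_0^1(1-s)h''(s)\,{\rm d}s\ge\tfrac c2|x_0|^2,$$
with equality forcing $h''(s)=c|x_0|^2$ for all $s\in[0,1)$, by continuity. At $s=0$ this gives $\langle(\nabla^2V(x)-cI_n)x_0,x_0\rangle=0$ for every $x$. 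Since $\nabla^2V(x)-cI_n$ is symmetric positive semidefinite, a vanishing quadratic form at $x_0$ implies $(\nabla^2V(x)-cI_n)x_0=0$: write the matrix as $A=B^2$ with $B\ge0$, so $|Bx_0|^2=0$ gives $Bx_0=0$ and hence $Ax_0=0$. As $x$ is arbitrary, $x_0\in{\rm Ker}(\nabla^2V-cI_n)$.

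The only delicate points are the equality-in-convexity arguments and the positive semidefinite kernel fact. Both are elementary, so I expect no real obstacle.
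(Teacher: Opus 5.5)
Your proof is correct and rests on the same ingredients as the paper's: the integral identity for $\nabla V(x+x_0)-\nabla V(x)$, the Taylor formula with integral remainder, the fact that a nonnegative continuous integrand with zero integral vanishes, and the positive semidefinite kernel fact. The only difference is organization: the paper proves (i)$\Leftrightarrow$(ii) and (i)$\Leftrightarrow$(iii) separately, while you close the cycle (i)$\Rightarrow$(ii)$\Rightarrow$(iii)$\Rightarrow$(i), and your (ii)$\Rightarrow$(iii) step is essentially the paper's (ii)$\Rightarrow$(i) equality argument restricted to the direction $x_0$ and then integrated.
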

\begin{proof} If $x_0=0$, we have nothing to prove; thus, let $x_0\neq 0.$
	On the one hand, for every $x\in \mathbb R^n$ one has that 
	\begin{equation}\label{V-prop-1}
		\nabla V(x+x_0)-\nabla V(x)=\int_0^1 \nabla^2 V(x+tx_0)\cdot x_0\, {\rm d}t= \int_0^1 \left(\nabla^2 V(x+tx_0)-cI_n\right)\cdot x_0\, {\rm d}t +cx_0. 
	\end{equation}

	(i)$\implies$(ii) Since $x_0\in  {\rm Ker}(\nabla^2 V -cI_n)$,  by \eqref{V-prop-1} one has
	$	\nabla V(x+x_0)-\nabla V(x)=cx_0$ for every $x\in \mathbb R^n$; thus 
	$V(x+x_0)- V(x)=c\langle x_0,x\rangle  + c_0$ for some $c_0\in \mathbb R^n$. It is clear that $c_0=V(x_0)-V(0)$.
	
	(ii)$\implies$(i) By \eqref{V-prop-1}, it follows that for every $x\in \mathbb R^n$ one has
	$$\int_0^1 \langle x_0,\left(\nabla^2 V(x+tx_0)-cI_n\right)\cdot x_0\rangle\, {\rm d}t=0.$$
	Since	$\nabla^2 V -cI_n\geq 0$, the latter relation implies that $\langle x_0,\left(\nabla^2 V(x+tx_0)-cI_n\right)\cdot x_0\rangle=0$ for every $t\in (0,1)$ and $x\in \mathbb R^n$; in fact, by an algebraic argument\footnote{If $A$  is a symmetric positive semi-definite $n\times n$ matrix and $x_0\in \mathbb R^n$ with $\langle x_0,A\cdot x_0\rangle=0$, then $A\cdot x_0 =0.$}, one has   that $\left(\nabla^2 V(x+tx_0)-cI_n\right)\cdot x_0=0$, that is,  $x_0$ is in the null space of   $\nabla^2 V(x)-cI_n$ for every $x\in \mathbb R^n,$ i.e., $x_0\in  {\rm Ker}(\nabla^2 V -cI_n)$.
	
	On the other hand, by a Taylor expansion, one has for every $x\in \mathbb R^n$ that
	\begin{align*}\label{appendix-V-expans}
		\nonumber	V(x+x_0)-V(x)&=\langle \nabla V(x), x_0\rangle +\int_0^1 (1-t)\langle x_0, \nabla^2 V(x+tx_0)\cdot x_0\rangle \, {\rm d}t\\
		&=\langle \nabla V(x), x_0\rangle +\frac{c}{2}\abs{x_0}^2+ \int_0^1 (1-t)\langle x_0, (\nabla^2 V(x+tx_0)-cI_n)\cdot x_0\rangle \, {\rm d}t.
	\end{align*}
	Now, we may proceed as above to prove that (i)$\Longleftrightarrow$(iii). 
\end{proof}

\begin{remark}\rm Let $E_0^V:={\rm Ker}(\nabla^2 V -cI_n)\subset \mathbb R^n$, and consider its orthogonal complement $E_\perp^V\subset \mathbb R^n$.  If $\tilde V(x)=V(x)-V(0)$, $x\in \mathbb R^n,$ by Proposition \ref{V-properties} one has a \textit{Pythagorean identity} for $\tilde V$: for every $x=x_0+x_\perp$ with $x_0\in E_0^V$ and $x_\perp\in E_\perp^V$ one has that $\tilde V(x)=\tilde V(x_0)+\tilde V(x_\perp).$
\end{remark}

\section{Bakry--\'Emery  log-Sobolev inequality: proof of Theorems \ref{Theorem-LSI} and \ref{Theorem-LSI-radial}}\label{section-3}

\subsection{Stability in Bakry--\'Emery  log-Sobolev inequality (Theorem \ref{Theorem-LSI})}\label{section-stability}

%
%
%

The proof relies on the adaptation of  Maurey's  arguments   \cite{Maurey} to deduce the estimate \eqref{stability-estimate-00} from the stability of the Pr\'ekopa--Leindler inequality for suitable functions. To do this, let us consider a 
positive log-concave  function $f\in W^{1, 2}(\RR^n, \mu_V)$; in particular, 
$f^2=e^g$ for a concave function $g:\RR^n\to \RR$. 
Now fix $\lambda\in (0, 1)$, and consider the following functions on $\RR^n$, given by
\begin{equation}\label{u-v-w}
	u_{\lambda}(x)=e^{g(x)/(1-\lambda)-V(x)}, \quad v(y)=e^{-V(y)}, \quad w_{\lambda}(z)=e^{g_{\lambda}(z)-V(z)},
\end{equation}
where $$g_{\lambda}(z)=\sup_{z=(1-\lambda)x+\lambda y}\{g(x)-[(1-\lambda)V(x)+\lambda V(y)-V((1-\lambda)x+\lambda y)]\}.$$
By definition, it follows that 
\begin{equation}\label{w_lambda}
	w_{\lambda}(z)=\sup_{z=(1-\lambda)x+\lambda y}u_{\lambda}(x)^{1-\lambda}v(y)^{\lambda}.
\end{equation}
Since $g$ is  concave and $x\mapsto V(x)-\frac{c}{2}|x|^2$ is convex, it is easily seen that $u_{\lambda}$ and $v$ are   log-concave. Moreover,  according to    \eqref{w_lambda} and B\"or\"oczky and De \cite[Lemma 7.3 (i)]{BoroczkyDe}, it follows that $w_{\lambda}$ is log-concave as well. 

Note that the functions $v$,  $u_\lambda$, and $w_\lambda$ belong to  $L^1(\mathbb R^n,{\rm d}x)$ for every $\lambda\in (0,1)$. Indeed, the case of $v$ is trivial. For the function $u_\lambda$,   if $l\in \partial g(0)$ (where $\partial g(0)$ stands for the superdifferential  of the concave function $g$ at the origin), it follows that
$$g(x)\leq g(0)+\langle l,x\rangle,\ \ \ x\in \mathbb R^n.$$ Moreover, since $\nabla^2 V -cI_n\geq 0$, we also have 
\begin{equation}\label{V-estimate}
	V(x)\geq V(0)+\langle \nabla V(0),x\rangle +\frac{c}{2}|x^2|,\ \ \ x\in \mathbb R^n.
\end{equation}
Therefore, for every $\lambda\in (0,1)$ and $x\in \mathbb R^n$, we deduce that $$\frac{g(x)}{1-\lambda}-V(x)\leq \frac{g(0)}{1-\lambda}-V(0)+\left(\frac{|l|}{1-\lambda}+|\nabla V(0)|\right)|x|-\frac{c}{2}|x^2|,$$
which implies $u_\lambda=e^{g/(1-\lambda)-V}\in L^1(\mathbb R^n,{\rm d}x).$ A similar reasoning also guarantees  that $w_\lambda\in L^1(\mathbb R^n,{\rm d}x)$ for every $\lambda\in (0,1)$. 

In view of the Pr\'ekopa--Leindler inequality, we define $$\varepsilon_{\lambda}:=\frac{\displaystyle \int_{\RR^n}w_{\lambda} \, {\rm d}x-\left(\int_{\RR^n}u_{\lambda} \, {\rm d}x\right)^{1-\lambda}}{\displaystyle \left(\int_{\RR^n}u_{\lambda} \, {\rm d}x\right)^{1-\lambda}}\ge 0.$$ Thus, $$\int_{\RR^n} w_{\lambda} \, {\rm d}x=(1+\varepsilon_{\lambda})\left(\int_{\RR^n}u_{\lambda} \, {\rm d}x\right)^{1-\lambda},$$ and the conditions  of Proposition \ref{Stability_nonsharp} are verified. In consequence, there exist a dimension-depending constant $C_n>0$ and $x_0^\lambda\in \RR^n$   for every $\lambda\in (0,1) $  such that 
\begin{equation}\label{main-stability-logSob}
	\int_{\RR^n} \abs{\frac{u_{\lambda}(x)}{\int_{\RR^n}u_{\lambda}}-v(x-x_0^\lambda)}\, {\rm d}x\le C_n \left(\frac{\varepsilon_{\lambda}}{\tau_{\lambda}}\right)^{\frac{1}{19}},
\end{equation} where $\tau_\lambda=\min(\lambda, 1-\lambda)$. 


Our aim is to take the limit in \eqref{main-stability-logSob} as $\lambda\to 0^+.$ To do this, we adapt the Maurey-argument  from  Bobkov and Ledoux \cite{BobkovLedoux}; in their case the function  $g: \mathbb R^n\to \mathbb R$ is smooth with compact support, while in ours  it is
concave. 
%

We first focus on the right-hand side of \eqref{main-stability-logSob}. 
By the definition of $g_\lambda$ and \eqref{V-strict-convex}, we have $$g_{\lambda}(z)\le \sup_{h\in \RR^n}\left[g\left(z+\frac{\lambda}{1-\lambda}h\right)-\frac{c\lambda}{2(1-\lambda)}|{h}|^2\right].$$
The function $g$ is concave, so it is $\mu-$a.e.\ differentiable; therefore, for $\mu-$a.e. $z\in \RR^n$ and for every $h\in \RR^n$, one has 
$$g\left(z+\frac{\lambda}{1-\lambda}h\right)\le g(z)+\frac{\lambda}{1-\lambda} \langle \nabla g(z), h\rangle.$$
As a consequence, for $\mu-$a.e. $z\in \RR^n$, we have 
$$g_{\lambda}(z)\le g(z)+\frac{\lambda}{1-\lambda}\sup_{h\in \RR^n}\left[\langle \nabla g(z), h\rangle -\frac{c}{2}|{h}|^2\right]=g(z)+\frac{\lambda}{2(1-\lambda)c}|{\nabla g(z)}|^2.$$
%
Accordingly, we deduce that 
$$\int_{\RR^n} w_{\lambda} \, {\rm d}x\le \int_{\RR^n}e^{g(z)+\frac{\lambda}{2c(1-\lambda)}|{\nabla g(z)}|^2}\, {\rm d}\mu_V(z),$$
which implies 
\begin{equation}\label{deficit-estimate}
	\frac{\varepsilon_{\lambda}}{\tau_{\lambda}}\le  \frac{\displaystyle \int_{\RR^n}e^{g(z)+\frac{\lambda}{2c(1-\lambda)}|{\nabla g(z)}|^2}\, {\rm d}\mu_V(z)-\left(\int_{\RR^n}u_{\lambda}\, {\rm d}x\right)^{1-\lambda}}{\tau_{\lambda}\displaystyle \left(\int_{\RR^n}u_{\lambda}\, {\rm d}x\right)^{1-\lambda}}.
\end{equation}
A simple computation shows that 
\begin{equation}\label{limit-1}
	\lim_{\lambda\to 0^+}\frac{{\rm d}}{{\rm d}\lambda}\left(\int_{\RR^n}e^{g(z)+\frac{\lambda}{2c(1-\lambda)}|{\nabla g(z)}|^2}\, {\rm d}\mu_V(z)\right)=\frac{1}{2c}\int_{\RR^n}e^{g(z)}|{\nabla g(z)}|^2\, {\rm d}\mu_V(z),
\end{equation}
and
\begin{equation}\label{limit-2}
	\lim_{\lambda \to 0^+}\frac{\rm d}{\rm d\lambda}\left(\int_{\RR^n}u_{\lambda} \, {\rm d}x\right)^{1-\lambda}={\rm Ent}_{\mu_V}(e^g)={\rm Ent}_{\mu_V}(f^2).
\end{equation} 
By using the dominated convergence theorem, we have 
\begin{equation}\label{limit-3}
	\lim_{\lambda \to 0^+}\left(\int_{\RR^n} u_{\lambda} \, {\rm d}x\right)=\int_{\RR^n}e^{g(x)-V(x)}\, {\rm d}x=\int_{\RR^n}e^g\, {\rm d}\mu_V=\int_{\RR^n}f^2 \, {\rm d}\mu_V=:\alpha.
\end{equation}
Thus, since $\displaystyle \lim_{\lambda\to 0^+}\frac{\lambda}{\tau_{\lambda}}=1$, in view of \eqref{limit-1}, \eqref{limit-2}, and \eqref{limit-3}, l'Hospital's  rule implies
\begin{align}\label{main-limit}
	\lim_{\lambda\to 0^+} \frac{\displaystyle  \int_{\RR^n}e^{g(z)+\frac{\lambda}{2c(1-\lambda)}|{\nabla g(z)}|^2}\, {\rm d}\mu_V(z)-\left(\int_{\RR^n}u_{\lambda}\, {\rm d}x\right)^{1-\lambda}}{\tau_{\lambda}\displaystyle \left(\int_{\RR^n}u_{\lambda}\, {\rm d}x\right)^{1-\lambda}}&=	\frac{1}{\alpha}\left(\frac{2}{c}\int_{\RR^n}|{\nabla f}|^2 \, {\rm d}\mu_V- {\rm Ent}_{\mu_V}(f^2)\right) \nonumber \\
	&=\delta^{\sf LSI}_{\mu_V}(f).
\end{align}
Taking the limit as $\lambda \to 0^+$ in \eqref{main-stability-logSob}, we obtain, according to \eqref{deficit-estimate} and \eqref{main-limit}, that 
\begin{equation}\label{Left-hand-side}
	\limsup_{\lambda \to 0^+}\int_{\RR^n} \abs{\frac{e^{g(x)/(1-\lambda)-V(x)}}{\int_{\RR^n}e^{g/(1-\lambda)}{\rm d}\mu_V}-e^{-V(x-x_0^\lambda)}}\, {\rm d}x\le C_n (\delta^{\sf LSI}_{\mu_V}(f))^{\frac{1}{19}}.
\end{equation}

Now, we focus on the left hand side of \eqref{main-stability-logSob}, which is the same as in \eqref{Left-hand-side}. Assume by contradiction, that there is a sequence $\lambda_k\to 0^+$ such that $|x_0^{\lambda_k}|\to \infty$. Due to \eqref{V-estimate}, it follows that $V(x-x_0^{\lambda_k})\to +\infty$ for every $x\in \mathbb R^n$; thus, by the dominated convergence theorem and \eqref{Left-hand-side}, it follows that 
\begin{eqnarray*}
	1&=&\lim_{k \to \infty}\int_{\RR^n} \abs{\frac{e^{g(x)/(1-\lambda_k)-V(x)}}{\int_{\RR^n}e^{g/(1-\lambda_k)}{\rm d}\mu_V}-e^{-V(x-x_0^{\lambda_k})}}\, {\rm d}x\leq\limsup_{\lambda \to 0^+}\int_{\RR^n} \abs{\frac{e^{g(x)/(1-\lambda)-V(x)}}{\int_{\RR^n}e^{g/(1-\lambda)}{\rm d}\mu_V}-e^{-V(x-x_0^\lambda)}}\, {\rm d}x\\&\le& C_n (\delta^{\sf LSI}_{\mu_V}(f))^{\frac{1}{19}},
\end{eqnarray*}
which contradicts  the assumption $\delta^{\sf LSI}_{\mu_V}(f)<C_n^{-19}.$ 

As a consequence, the set $\{x_0^\lambda:\lambda\in (0,1)\}\subset \mathbb R^n$ is bounded. Thus, we may consider a subsequence of $(x_0^\lambda)$ converging to some element $x_0\in \mathbb R^n$. A similar reasoning as above, based on  \eqref{Left-hand-side}, implies relation \eqref{stability-estimate-00}, concluding the proof.  

\subsection{Stability in radial Bakry--\'Emery  log-Sobolev inequality (Theorem \ref{Theorem-LSI-radial}/(i))}

Assume  that $f\in W^{1, 2}(\RR^n, \mu_V)$ is a positive log-concave radial function,  
the potential $V\in C^2(\mathbb R^n)$ is also radial,  and  $\nabla^2 V -cI_n\geq 0$ for some $c>0.$ In order to prove Theorem \ref{Theorem-LSI-radial}/(i), we show that the functions $u_{\lambda}, v$, and $w_{\lambda}$ introduced in \eqref{u-v-w} inherit the radial character  of $f$ and $V$. As before, let $e^g=f^2$; hence, $g$ is also radially symmetric, i.e., $g(\tau x)=g(x)$ for every $\tau\in O(n)$ and $x\in \mathbb R^n.$

Let us fix $\tau\in O(n)$ arbitrarily. First, we clearly have $v(\tau y)=e^{-V(\tau y)}=e^{-V(y)}=v(y)$ for every  $y\in \mathbb R^n;$ thus, $v$ is radial. Moreover, 
$u_{\lambda}(\tau x)=e^{g(\tau x)/(1-\lambda)-V(\tau x)}=e^{g(x)/(1-\lambda)-V(x)}=u_{\lambda}(x)$ for every  $x\in \mathbb R^n;$ therefore,  $u_{\lambda}$ is radial for every $\lambda\in (0,1)$.  Finally, the radiality of $w_{\lambda}$ follows at once by the same property of $g_{\lambda}$. Indeed, for  every $z\in \mathbb R^n,$  it follows that  
\begin{eqnarray*}
	g_{\lambda}(\tau z) 
	&=& V(\tau z)+\sup_{\tau z=(1-\lambda)x+\lambda y}\{g(x)-[(1-\lambda)V(x)+\lambda V(y)]\}\\                            
	&=&V(z)+ \sup_{\tau z=(1-\lambda)\tau \Tilde{x}+\lambda \tau \Tilde{y}}\{g(\tau \Tilde{x})-[(1-\lambda)V(\tau  \Tilde{x})+\lambda V(\tau \Tilde{y})]\} 
	\\
	&=&V(z)+\sup_{z=(1-\lambda)\Tilde{x}+\lambda \Tilde{y}}\{g(\Tilde{x})-[(1-\lambda)V(\Tilde{x})+\lambda V(\Tilde{y})]\}\\
	&=&g_{\lambda}(z).
\end{eqnarray*}

We are in the position to repeat the arguments from \S \ref{section-stability}, this time applying Corollary \ref{Stability_sharp-corollary} instead of Proposition \ref{Stability_nonsharp}. Therefore, by replacing the exponent $1/19$  by $1/2$ and setting $x_0=0$, we obtain $$\int_{\RR^n} \abs{\frac{f^2}{\alpha}-1}\, {\rm d}\mu_V \le C_n\left(\delta^{\sf LSI}_{\mu_V}(f)\right)^{\frac{1}{2}},$$
which is the required relation \eqref{stability-estimate-00-radial}.

\subsection{Optimality of the exponent 1/2 in \eqref{stability-estimate-00-radial} (Theorem \ref{Theorem-LSI-radial}/(ii))} Assume by contradiction that there exist an absolute constant $\eta>\frac{1}{2}$ and a dimension-depending number $C_n>0$  such that
\begin{equation}\label{contradiction-1}
	\int_{\RR^n} \abs{\frac{f^2}{\alpha}-1}\, {\rm d}\mu_V \le C_n\cdot\left(\delta^{\sf LSI}_{\mu_V}(f)\right)^\eta
\end{equation}
for every radial potential $V\in C^2(\mathbb R^n)$ with $\nabla^2 V -cI_n\geq 0$ for some $c>0$ and for every positive log-concave and radial function
$f\in W^{1, 2}(\RR^n, \mu_V)$. In particular,   \eqref{contradiction-1} is also valid for the quadratic function   $V(x)=\frac{c}{2}|x|^2+\frac{n}{2}\log(\frac{2\pi}{c})$; it is clear that ${\rm d}\mu_c:={\rm d}\mu_V =e^{-V}{\rm d}x$ is a probability measure on $\mathbb R^n.$ Similarly to Balogh and Krist\'aly \cite{Balogh-Kristaly-preprint}, for every $\epsilon>0$, let us consider the family of radially symmetric log-concave functions  $$f_\epsilon(x)=e^{-\frac{\epsilon}{2}|x|^2},\ x\in \mathbb R^n.$$
In particular, by \eqref{contradiction-1} one has for $0<\epsilon\ll 1$ that 
\begin{equation}\label{replaced-contra}
	\int_{\RR^n} \abs{\frac{e^{-{\epsilon}|x|^2}}{\int_{\mathbb R^n}f_\epsilon ^2 {\rm d}\mu_c}-1}\, {\rm d}\mu_c \le C_n\left(\delta^{\sf LSI}_{\mu_c}(f_\epsilon)\right)^\eta.
\end{equation}	
Simple computations show that
$$\int_{\mathbb R^n}f_\epsilon ^2 {\rm d}\mu_c=\left(\frac{2\epsilon}{c}+1\right)^{-\frac{n}{2}},\ \ \ \ \int_{\mathbb R^n} |\nabla f_\epsilon|^2\mathrm{~d} \mu_c=\epsilon^2\frac{n}{c}\left(\frac{2\epsilon}{c}+1\right)^{-\frac{n}{2}-1},$$
and
$$ \int_{\mathbb R^n} f_\epsilon^2 \log f_\epsilon^2 \mathrm{~d} \mu_c=-\epsilon\frac{n}{c}\left(\frac{2\epsilon}{c}+1\right)^{-\frac{n}{2}-1}.$$
Thus,
$$	{\rm Ent}_{\mu_c}(f_\epsilon^2)=\frac{n}{2}\left(\frac{2\epsilon}{c}+1\right)^{-\frac{n}{2}-1}\left(-\frac{2\epsilon}{c}+\left(\frac{2\epsilon}{c}+1\right)\log\left(\frac{2\epsilon}{c}+1\right)\right).$$
Dividing \eqref{replaced-contra} by $\epsilon>0,$ on the one hand, it follows by Fatou's lemma  that
\begin{eqnarray*}
	LHS&:=&\liminf_{\epsilon\to 0}\frac{1}{\epsilon}\int_{\RR^n} \abs{\frac{e^{-{\epsilon}|x|^2}}{\int_{\mathbb R^n}f_\epsilon ^2 {\rm d}\mu_c}-1}\, {\rm d}\mu_c = \liminf_{\epsilon\to 0}\frac{1}{\epsilon}\int_{\R^n}\Big| \left(\frac{2\epsilon}{c}+1\right)^{\frac{n}{2}}e^{-{\epsilon}|x|^2}-1\Big| \mathrm{d}\mu_c (x)\\&\geq& \int_{\R^n}\liminf_{\epsilon\to 0}\frac{1}{\epsilon}\Big| \left(\frac{2\epsilon}{c}+1\right)^{\frac{n}{2}}e^{-{\epsilon}|x|^2}-1\Big| \mathrm{d}\mu_c (x)\\&=&\int_{\R^n}\left| -|x|^2+\frac{n}{c} \right| \mathrm{d}\mu_c (x),
\end{eqnarray*}
which is positive and finite. On the other hand, we have 
$$\delta^{\sf LSI}_{\mu_c }(f_\epsilon):=\frac{\displaystyle\frac{2}{c}\int_{\RR^n} |{\nabla f_\epsilon}|^2\, {\rm d}\mu_c-{\rm Ent}_{\mu_c}(f_\epsilon^2)}{\displaystyle\int_{\mathbb R^n} f_\epsilon^2{\rm d}\mu_c}=\frac{n}{2}\left(\frac{2\epsilon}{c}-\log\left(\frac{2\epsilon}{c}+1\right)\right)=n\frac{\epsilon^2}{c^2}+o(\epsilon^2), \ \ 0<\epsilon\ll 1.$$
Thus, by the assumption $\eta>\frac{1}{2}$, it follows that $$RHS:=\lim_{\epsilon\to 0}\frac{\left(\delta^{\sf LSI}_{\mu_c}(f_\epsilon)\right)^\eta}{\epsilon}=0,$$
a contradiction.

	%

\section{Talagrand transport inequality: proof of Theorems \ref{Theorem-Talagrand} and \ref{Theorem-Talagrand-radial}}\label{section-4}

\subsection{Stability in Talagrand's inequality (Theorem \ref{Theorem-Talagrand}/(i))}\label{section-talag-1}

%
%

Let  $\nu $ be a probability measure on $\mathbb R^n$ which is absolutely continuous with respect to $\mu_V$, having its density
$u=\frac{{\rm d}\nu}{{\rm d}\mu_V}.$
Given the cost function $c(x,y)=\frac{c}{2}\abs{x-y}^2$,  the optimal mass transport theory -- developed by Brenier \cite{Brenier} and McCann \cite{McCann} -- guarantees the existence of a function $g:\mathbb R^n\to \mathbb R$ such that $x\mapsto \frac{c}{2}|x|^2+g(x)$ is convex on $\mathbb R^n$ and 
$T(x)=x+\frac{1}{c}\nabla g(x)$ optimally pushes $\mu_V$ forward onto $\nu$, see also Villani \cite[Theorem  2.44]{Villani-short}.
Consider the $c$-transform  of $(-g)$ with respect to $c(x,y)$, namely, the inf-convolution function $$Lg:\RR^n\to \RR,\quad  Lg(y)=\inf_{x\in \RR^n}\left[g(x)+\frac{c}{2}\abs{x-y}^2\right],$$
which is the main tool in    the Kantorovich duality theory.

For every $\lambda\in (0,1)$, we define the following functions on $\RR^n$:
\begin{equation}\label{u-v-w-talag}
	u_{\lambda}(x)=e^{-\lambda g(x)-V(x)}, \quad v_{\lambda}(y)=e^{(1-\lambda)Lg(y)-V(y)}, \quad w(z)=e^{-V(z)}.
\end{equation}
We claim that $u_{\lambda}$, $v_{\lambda}$, and $w$ belong to $L^1(\mathbb R^n,{\rm d}x)$. Indeed, since $x\mapsto \frac{c}{2}|x|^2+g(x)$ is convex, it follows that for some $l\in \mathbb R^n$, one has


$$\frac{c}{2}|x|^2+g(x) \geq g(0)+\langle l,x\rangle,\ \ x\in \mathbb R^n,$$
while, by definition, 
$$Lg(y)\leq g(0)+\frac{c}{2}\abs{y}^2,\ \  y\in \mathbb R^n.$$ Therefore, by \eqref{V-estimate}, for every $\lambda\in (0,1)$ it follows that
\begin{equation}\label{u-lambda-needed}
	-\lambda g(x)-V(x)\leq -(1-\lambda)\frac{c}{2}|x|^2+\langle \lambda l+\nabla V(0),x\rangle -\lambda g(0)-V(0),\ \ x\in \mathbb R^n,
\end{equation}
and 
$$(1-\lambda)Lg(y)-V(y)\leq -\lambda\frac{c}{2}|y|^2-\langle \nabla V(0),y\rangle +(1-\lambda) g(0)-V(0),\ \ y\in \mathbb R^n,$$
which prove the claim.

According to \eqref{V-strict-convex}, it is straightforward to see that for every $z\in \mathbb R^n$ we have
$$w(z)\geq \sup_{z=(1-\lambda)x+\lambda y}u_{\lambda}(x)^{1-\lambda}v(y)^{\lambda}.$$
Thus, in view of the Pr\'ekopa--Leindler inequality   we consider 
$$\varepsilon_{\lambda}:=\frac{1-\left(\displaystyle \int_{\RR^n} e^{-\lambda g}\, {\rm d}\mu_V\right)^{1-\lambda}\left(\displaystyle \int_{\RR^n}e^{(1-\lambda)Lg}\, {\rm d}\mu_V\right)^{\lambda}}{\left(\displaystyle \int_{\RR^n} e^{-\lambda g}\, {\rm d}\mu_V\right)^{1-\lambda}\left(\displaystyle \int_{\RR^n}e^{(1-\lambda)Lg}\, {\rm d}\mu_V\right)^{\lambda}}\ge 0.$$
Then $$\int_{\RR^n}w \, {\rm d}x=(1+\varepsilon_{\lambda})\left(\int_{\RR^n} u_{\lambda}\, {\rm d}x\right)^{1-\lambda}\left(\int_{\RR^n}v_{\lambda}\, {\rm d}x\right)^{\lambda},$$ and  since $w$ is obviously log-concave, we may apply Proposition \ref{Stability_nonsharp} to conclude that there exist a dimensional constant $C_n>0$ and a translation vector $x_0^\lambda\in \RR^n$ for every $\lambda\in (0,1)$ such that 
\begin{equation}\label{stability-ineq-Talagrand}
	\int_{\RR^n}\abs{\frac{u_{\lambda}(x)}{\int_{\RR^n}u_{\lambda}}-\frac{v_{\lambda}(x-x_0^\lambda)}{\int_{\RR^n}v_{\lambda}}}\, {\rm d}x\le C_n \left(\frac{\varepsilon_{\lambda}}{\tau_{\lambda}}\right)^{\frac{1}{19}},
\end{equation}
where $\tau_{\lambda}=\min(\lambda, 1-\lambda)$.

On the one hand, applying the dominated convergence theorem, we have  
$\lim_{\lambda \to 0^+}\varepsilon_{\lambda}=0.$
A simple computation shows that 
\begin{align*}
	\lim_{\lambda \to 0^+}\frac{\rm d}{\rm d\lambda}\left(\int_{\RR^n} e^{-\lambda g}\, {\rm d}\mu_V\right)^{1-\lambda}&=
	-\int_{\RR^n} g\, {\rm d}\mu_V
\end{align*}
and \begin{align*}
	\lim_{\lambda \to 0^+}\frac{\rm d}{{\rm d} \lambda}\left(\int_{\RR^n}e^{(1-\lambda)Lg}\, {\rm d}\mu_V\right)^{\lambda}&=
	\log\left(\int_{\RR^n}e^{Lg}\, {\rm d}\mu_V\right).
\end{align*}
Accordingly,  we have 
\begin{align*}
	\lim_{\lambda\to 0^+}\frac{\varepsilon_{\lambda}}{\lambda}&=\lim_{\lambda\to 0^+}\frac{1-\left(\displaystyle \int_{\RR^n} e^{-\lambda g}\, {\rm d}\mu_V\right)^{1-\lambda}\left(\displaystyle \int_{\RR^n}e^{(1-\lambda)Lg}\, {\rm d}\mu_V\right)^{\lambda}}{\lambda}
	=
	\int_{\RR^n}g\, {\rm d}\mu_V-\log\left(\int_{\RR^n}e^{Lg}\, {\rm d}\mu_V\right).
\end{align*}
Therefore, it follows that
\begin{equation}\label{limit-infconv}
	\lim_{\lambda \to 0^+}\frac{\varepsilon_{\lambda}}{\tau_{\lambda}}=\int_{\RR^n}g\, {\rm d}\mu_V-\log\left(\int_{\RR^n}e^{Lg}\, {\rm d}\mu_V\right).
\end{equation}

By construction,  the pair $(g, Lg)$ solves the dual Kantorovich problem, thus  
\begin{equation}\label{kantorovich-optimal}
	\int_{\RR^n} Lg\, {\rm d}\nu-\int_{\RR^n}g\, {\rm d}\mu_V=\frac{c}{2}W_2^2(  \nu,\mu_V),
\end{equation}
see, e.g.,\ Villani \cite[Chapter 2]{Villani-short}. Moreover, recall  the variational characterization of the entropy, i.e. \ 
\begin{equation}\label{entropy-variational}
	{\rm Ent}_{\mu_V}(u)=\sup \int_{\RR^n} uv\, {\rm d}\mu_V
\end{equation}
for every $\mu_V-$integrable function $u:\RR^n\to \RR$, where the supremum is taken over all measurable functions $v$ with $\displaystyle \int_{\RR^n}e^v\, {\rm d}\mu_V\le 1$, see, e.g.,\ Bobkov and Ledoux \cite{BobkovLedoux}.

Setting $u=\frac{{\rm d} \nu}{{\rm d} \mu_V}$  
and  $v =Lg -\log\left(\displaystyle\int_{\RR^n}e^{Lg}\, {\rm d}\mu_V\right)$  in \eqref{entropy-variational},  it turns out that 
\begin{equation}\label{entropy-kant}
	\int_{\RR^n}Lg\, {\rm d}\nu-\log\left(\int_{\RR^n}e^{Lg}\, {\rm d}\mu_V\right)\le{\rm Ent}_{\mu_V}(u)= D(\nu||\mu_V).
\end{equation}
Combining the latter estimate with \eqref{kantorovich-optimal}, we obtain   
$$\int_{\RR^n} g\, {\rm d}\mu_V-\log\left(\int_{\RR^n}e^{Lg}\, {\rm d}\mu_V\right)\le D(\nu||\mu_V)-\frac{c}{2}W_2^2(\nu,\mu_V)=\delta^{\sf Tal}_{\mu_V}(\nu).$$
By \eqref{stability-ineq-Talagrand}, \eqref{limit-infconv}, the latter relation, and a change of variables, we have
\begin{equation}\label{almost-final}
	\limsup_{\lambda\to 0^+}	\int_{\RR^n}\abs{\frac{u_{\lambda}(x+x_0^\lambda)}{\int_{\RR^n}u_{\lambda}}-\frac{v_{\lambda}(x)}{\int_{\RR^n}v_{\lambda}}}\, {\rm d}x\le C_n \left(\delta^{\sf Tal}_{\mu_V}(\nu)\right)^{\frac{1}{19}}.
\end{equation}

Let us assume by contradiction that there is a sequence $\lambda_k\to 0^+$ such that $|x_0^{\lambda_k}|\to \infty$. Due to \eqref{u-lambda-needed}, it follows that for every $x\in \mathbb R^n$ one has $	-\lambda_k g(x+x_0^k)-V(x+x_0^k)\to -\infty$, thus $u_{\lambda_k}(x+x_0^{\lambda_k})\to 0$ as $k\to \infty$. 
Therefore, by the dominated convergence theorem and \eqref{almost-final}, we get 
\begin{eqnarray*}
	1&=&\lim_{k \to \infty} \int_{\RR^n}\abs{\frac{u_{\lambda_k}(x+x_0^{\lambda_k})}{\int_{\RR^n}u_{\lambda_k}}-\frac{v_{\lambda_k}(x)}{\int_{\RR^n}v_{\lambda_k}}}\, {\rm d}x\leq \limsup_{\lambda\to 0^+}	\int_{\RR^n}\abs{\frac{u_{\lambda}(x+x_0^\lambda)}{\int_{\RR^n}u_{\lambda}}-\frac{v_{\lambda}(x)}{\int_{\RR^n}v_{\lambda}}}\, {\rm d}x\\&\le& C_n \left(\delta^{\sf Tal}_{\mu_V}(\nu)\right)^{\frac{1}{19}},
\end{eqnarray*}
contradicting  the assumption $\delta^{\sf Tal}_{\mu_V}(\nu)<C_n^{-19}.$ Accordingly, the set $\{x_0^\lambda:\lambda\in (0,1)\}\subset \mathbb R^n$ is bounded. By extracting a subsequence of $(x_0^\lambda)$ that converges to some $x_0\in \mathbb R^n$, \eqref{almost-final} provides the  estimate 
$$
\int_{\RR^n}\abs{e^{-V(x+x_0)}-\frac{e^{Lg(x)-V(x)}}{\int_{\RR^n}e^{Lg}\, {\rm d}\mu_V}}\, {\rm d}x\le C_n \left(\delta^{\sf Tal}_{\mu_V}(\nu)\right)^{\frac{1}{19}}.
$$
which is equivalent to  \eqref{deficit-estimate-talagrand}.

\subsection{Equality in Talagrand transport inequality  (Theorem \ref{Theorem-Talagrand}/(ii))} Assume that there is equality in \eqref{Talagrand-inequality} for some probability measure $\nu \ll \mu_V$ on $\mathbb R^n$, i.e., $\delta^{\sf Tal}_{\mu_V}(\nu)=0.$ First, by \eqref{deficit-estimate-talagrand}, we have  
\begin{equation}\label{equality-1}
	e^{Lg(x-x_0)+V(x)-V(x-x_0)}=\int_{\RR^n}e^{Lg}\, {\rm d}\mu_V\ \ {\rm for}\ \ \mu_V-{\rm a.e.}\ x\in \mathbb R^n,
\end{equation}
where  $g:\mathbb R^n\to \mathbb R$ is such that $T(x):=x+\frac{1}{c}\nabla g(x)$ becomes the Brenier map pushing forward  $\mu_V$ onto $\nu $. 
Moreover, tracking back the estimates in the previous proof, it turns out that equality should also hold  in \eqref{entropy-kant}, i.e., 
$$\int_{\RR^n}Lg\, {\rm d}\nu-\log\left(\int_{\RR^n}e^{Lg}\, {\rm d}\mu_V\right)= D(\nu||\mu_V).$$ The latter relation is precisely the equality case in the  Donsker--Varadhan variational formula  \cite[Theorem 5.2]{Donsker-Varadhan}, which is characterized by 
\begin{equation}\label{equality-2}
	\frac{{\rm d} \nu}{{\rm d} \mu_V}(x)=\frac{e^{Lg(x)}}{\int_{\RR^n}e^{Lg}\, {\rm d}\mu_V}\ \ \ {\rm for}\ \ \mu_V-{\rm a.e.}\ x\in \mathbb R^n.
\end{equation}
Therefore, combining relations \eqref{equality-1} and \eqref{equality-2}, it follows  for $\mu_V$-a.e.\ $x\in \mathbb R^n$ that 
$$\frac{{\rm d} \nu}{{\rm d} x}(x-x_0)=\left(\frac{{\rm d} \nu}{{\rm d} \mu_V}\cdot\frac{{\rm d} \mu_V}{{\rm d} x}\right)(x-x_0)=\frac{e^{Lg(x-x_0)}}{\int_{\RR^n}e^{Lg}\, {\rm d}\mu_V}\cdot e^{-V(x-x_0)}=e^{-V(x)}=\frac{{\rm d} \mu_V}{{\rm d} x}(x),$$
i.e.,  the density of $\nu$ is a translation of the density of $\mu_V$. In particular, the optimal transport map pushing forward $\mu_V$ into  $\nu$  is the translation $T(x)=x-x_0,$ $x\in \mathbb R^n$. Hence, 
\begin{equation}\label{Wass-relation}
	W_2^2(\nu,\mu_V)=\int_{\mathbb R^n}|x-T(x)|^2{\rm d}\mu_V(x)=|x_0|^2.
\end{equation}
Again by  relations \eqref{equality-1} and \eqref{equality-2}, it follows that
$$\frac{{\rm d} \nu}{{\rm d} \mu_V}(x)=e^{V(x)-V(x+x_0)}\ \ \ {\rm for}\ \ \mu_V-{\rm a.e.}\ x\in \mathbb R^n.$$
Accordingly, one has 
\begin{eqnarray*}
	D(\nu || \mu_V)&=&\int_{\RR^n} \frac{{\rm d}\nu}{{\rm d}\mu_V}\log \frac{{\rm d}\nu}{{\rm d}\mu_V} \, {\rm d}\mu_V=\int_{\RR^n} e^{-V(x+x_0)}({V(x)-V(x+x_0)}) \, {\rm d}x\\&=&\int_{\RR^n} ({V(x-x_0)-V(x)}) \, {\rm d}\mu_V(x).
\end{eqnarray*}
Note that by a Taylor expansion, we have 
\begin{align}\label{V-expans-0000}
	\nonumber	V(x-x_0)-V(x)&=-\langle \nabla V(x), x_0\rangle +\int_0^1 (1-t)\langle x_0, \nabla^2 V(x-tx_0)\cdot x_0\rangle \, {\rm d}t\\
	&=-\langle \nabla V(x), x_0\rangle +\frac{c}{2}\abs{x_0}^2+ \int_0^1 (1-t)\langle x_0, (\nabla^2 V(x-tx_0)-cI_n)\cdot x_0\rangle \, {\rm d}t.
\end{align}
Furthermore, by the fast decaying behavior of $e^{-V}$ at infinity (see  \eqref{V-estimate}), it follows that $$-\int_{\RR^n} \langle \nabla V(x), x_0 \rangle \, {\rm d}\mu_V(x)=\int_{\RR^n}\langle \nabla (e^{-V(x)}), x_0\rangle\, {\rm d}x=\int_{\RR^n}  {\rm div}(e^{-V(x)}   x_0 ) {\rm d}x=0,$$
thus
\begin{equation}\label{taylor-term-00000}
	\int_{\RR^n} (V(x-x_0)-V(x))\, {\rm d}\mu_V(x)=\frac{c}{2}\abs{x_0}^2 +\int_0^1 (1-t)\int_{\RR^n} \left\langle x_0, (\nabla^2 V(x-tx_0)-cI_n)\cdot x_0 \right \rangle\, {\rm d}\mu_V(x)\, {\rm d}t.
\end{equation}
%
%
%
%
%
Since $\delta^{\sf Tal}_{\mu_V}(\nu)=0,$ the latter relation and \eqref{Wass-relation} imply 
$$\int_0^1 (1-t)\int_{\RR^n} \left\langle x_0, (\nabla^2 V(x-tx_0)-cI_n)\cdot x_0 \right \rangle\, {\rm d}\mu_V(x)\, {\rm d}t=0$$
for every $x\in \mathbb R^n.$ Since $\nabla^2 V -cI_n\geq 0$, it follows that   
$$\nabla^2 V(x)\cdot x_0=cx_0, \qquad \forall x\in \RR^n,$$
i.e.,  $x_0\in  {\rm Ker}(\nabla^2 V -cI_n)$. 

The converse is trivial, based on Proposition \ref{V-properties}. 

\begin{remark}\rm 
	Since  $T(x)=x+\frac{1}{c}\nabla g(x)$, it follows that  $\nabla g(x)=-cx_0$, i.e., $g(x)=-c\langle x_0,x\rangle +c_0$ and $Lg(y)=-c\langle x_0,y\rangle- \frac{c}{2}|x_0|^2 +c_0$ for some $c_0\in \mathbb R^n$. 
\end{remark}

Having the equality case in the Talagrand transport inequality, we can identify the equality in the Bakry--\'Emery log-Sobolev inequality as well, without making any concavity assumption on the extremizer. More precisely, we have:

\begin{corollary}\label{corollary-LSI=}
  Let ${\rm d}\mu_V=e^{-V}{\rm d}x$  be a probability measure on $\mathbb R^n$ such that $V\in C^2(\mathbb R^n)$ and  $\nabla^2 V -cI_n\geq 0$ for some $c>0.$ Then  equality holds in \eqref{gaussian-logsob-00} for some $f\in W^{1, 2}(\RR^n, \mu_V)$  if and only if $f(x)=a\cdot e^{\langle x_0,x\rangle}$ for some $a\in \mathbb R$  and $x_0\in  {\rm Ker}(\nabla^2 V -cI_n)$.
  \end{corollary}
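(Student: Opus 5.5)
The plan is to reduce equality in \eqref{gaussian-logsob-00} to equality in Talagrand's inequality \eqref{Talagrand-inequality}, and then invoke Theorem \ref{Theorem-Talagrand}/(ii). The bridge is the HWI inequality of Otto and Villani. The converse direction is a direct computation based on Proposition \ref{V-properties}.

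\emph{Sufficiency.} Let $f(x)=a e^{\langle x_0,x\rangle}$ with $x_0\in {\rm Ker}(\nabla^2 V-cI_n)$. By Proposition \ref{V-properties}/(ii), applied to $x_0/c$ (the kernel is a linear subspace), we have $e^{V(x)-V(x-x_0/c)}=C e^{\langle x_0,x\rangle}$. Hence $f^2/\alpha\,{\rm d}\mu_V=e^{-V(\cdot-2x_0/c)}{\rm d}x$ after rescaling the constants, so $\nu=\frac{f^2}{\alpha}\mu_V$ is a translate of $\mu_V$. One then checks directly, by using (iii) of Proposition \ref{V-properties} and the integration-by-parts identity $\int\langle\nabla V,x_0\rangle{\rm d}\mu_V=0$ used in the Talagrand equality proof, that
\[
{\rm Ent}_{\mu_V}(f^2)=\alpha\Bigl(2|x_0|^2/c+\log\text{-terms that cancel}\Bigr)=\frac{2}{c}\int|\nabla f|^2{\rm d}\mu_V=\frac{2}{c}|x_0|^2\alpha.
\]
The case $a=0$ is trivial.

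\emph{Necessity.} Assume $f\neq 0$ attains equality, and normalize $\alpha=1$. Set $\nu=f^2\mu_V$ and denote the Fisher information by $I(\nu)=4\int|\nabla f|^2{\rm d}\mu_V$. Equality in LSI then reads $D(\nu\|\mu_V)=\frac{1}{2c}I(\nu)$. The HWI inequality (Otto--Villani, valid under $\nabla^2V\ge cI_n$) gives
\[
D(\nu\|\mu_V)\le W_2(\nu,\mu_V)\sqrt{I(\nu)}-\frac{c}{2}W_2^2(\nu,\mu_V).
\]
Writing $W=W_2(\nu,\mu_V)$ and $I=I(\nu)$, equality yields $\frac{I}{2c}\le W\sqrt I-\frac c2W^2$, that is, $\frac{1}{2c}(\sqrt I-cW)^2\le 0$. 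Hence $\sqrt I=cW$, and therefore $D(\nu\|\mu_V)=\frac{I}{2c}=\frac c2W^2$. So $\nu$ is an equality case in \eqref{Talagrand-inequality}, and Theorem \ref{Theorem-Talagrand}/(ii) gives ${\rm d}\nu=e^{-V(\cdot+y_0)}{\rm d}x$ with $y_0\in{\rm Ker}(\nabla^2V-cI_n)$. Consequently $f^2=e^{V(x)-V(x+y_0)}$, which by Proposition \ref{V-properties}/(ii) equals $e^{-c\langle y_0,x\rangle-V(y_0)+V(0)}$. Thus $|f|=a e^{\langle x_0,x\rangle}$ with $x_0=-\frac c2 y_0\in{\rm Ker}$.

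The remaining point is the sign. Since $|f|$ is positive everywhere and continuous in the Sobolev sense, and since equality in LSI forces $|\nabla|f||=|\nabla f|$, we get that $f$ has a constant sign. This yields $f=a e^{\langle x_0,x\rangle}$ with $a\in\mathbb R$.

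The main obstacle I expect is justifying HWI and the Fisher information for general $f\in W^{1,2}(\mathbb R^n,\mu_V)$ that is not necessarily smooth or positive. This requires approximation, or citing HWI in the form valid for $\nu$ with finite Fisher information. A further delicate point is the sign argument when $f$ might vanish; here I would use that $|f|$ is itself an extremizer, hence positive, which forces $f$ to have constant sign.
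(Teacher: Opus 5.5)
Your proposal is correct and is essentially the paper's proof. HWI forces equality in Talagrand's inequality, after which Theorem \ref{Theorem-Talagrand}/(ii) and Proposition \ref{V-properties} identify $|f|$, and the converse is the same direct computation; your completing-the-square step $\frac{1}{2c}(\sqrt I-cW)^2\le 0$ is in fact a cleaner form of the paper's HWI-deficit identity, whose right-hand side should read $2cW_2^2\,\delta^{\sf LSI}_{\mu_V}(f_0)-(\delta^{\sf Tal}_{\mu_V})^2$.

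On the sign, note that $|\nabla|f||=|\nabla f|$ a.e.\ holds for every Sobolev function and is not forced by equality. The clean argument is that $f e^{-\langle x_0,x\rangle}/a\in W^{1,2}_{\rm loc}$ takes only the values $\pm1$, hence has zero gradient a.e.\ and is constant on $\mathbb R^n$; the paper itself passes over this point with a $\pm$.
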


\begin{proof}
	We first recall the HWI inequality of Otto and Villani \cite{Otto-Villani} (see also Cordero-Erausquin \cite{Cordero-ARMA}), which establishes a relation between the entropy, Wasserstein distance, and   Fisher information: given the probability measures ${\rm d}\mu_V=e^{-V}{\rm d}x$ and ${\rm d}\nu= f_0^2{\rm d}\mu_V$  with $f_0\in W^{1,2}(\mathbb R^n,{\rm d}\mu_V)$, one has  
	$${\rm Ent}_{\mu_V}(f_0^2)\leq W_2(\mu_V,f_0^2{\rm d}\mu_V)\sqrt{I_{\mu_V}(f_0^2)}-\frac{c}{2}W_2^2(\mu_V,f_0^2{\rm d}\mu_V).$$
	Here, as usual, 
	$$I_{\mu_V}(f_0^2)=4\int_{\mathbb R^n}|\nabla f_0|^2{\rm d}\mu_V$$
	stands for the Fisher information of $f_0^2$ with respect to $\mu_V$. A suitable rearrangement of the HWI inequality and elementary algebraic manipulations  yield  that  the \textit{HWI deficit} can be written in terms of the Bakry--\'Emery log-Sobolev  and Talagrand deficits  as follows:
	\begin{eqnarray*}
		0&\leq& \delta^{\sf HWI}(f_0^2{\rm d}\mu_V):=W_2^2(\mu_V,f_0^2{\rm d}\mu_V)\, I_{\mu_V}(f_0^2) -\left({\rm Ent}_{\mu_V}(f_0^2)+\frac{c}{2}W_2^2(\mu_V,f_0^2{\rm d}\mu_V)\right)^2\\&=&2c W_2^2(\mu_V,f_0^2{\rm d}\mu_V)\, \delta^{\sf LSI}_{\mu_V}(f_0) -\delta^{\sf Tal}_{\mu_V}(f_0^2{\rm d}\mu_V).   
	\end{eqnarray*}
As a result, we have  
\begin{equation}\label{relation-deficits}
	\delta^{\sf Tal}_{\mu_V}(f_0^2{\rm d}\mu_V)\leq 2c W_2^2(\mu_V,f_0^2{\rm d}\mu_V)\, \delta^{\sf LSI}_{\mu_V}(f_0).
\end{equation}

Now, assume that equality holds in \eqref{gaussian-logsob-00} for some $f\in W^{1, 2}(\RR^n, \mu_V)\setminus \{0\}$, i.e.,  $\delta^{\sf LSI}_{\mu_V}(f)=0$. If $f_0=|f|/\sqrt{\alpha}$, where, as usual, $\alpha=\displaystyle \int_{\mathbb R^n} f^2{\rm d}\mu_V$, it follows that $$\delta^{\sf LSI}_{\mu_V}(f_0)=0.$$ According to \eqref{relation-deficits}, it turns out that $\delta^{\sf Tal}_{\mu_V}(f_0^2{\rm d}\mu_V)=0.$
By applying Theorem  \ref{Theorem-Talagrand}/(ii), we have $f_0^2{\rm d}\mu_V=e^{-V(\cdot +x_0)}{\rm d}x$, where  $x_0\in  {\rm Ker}(\nabla^2 V -cI_n)$. Thus, $f^2=\alpha e^{V(x)-V(x +x_0)}$ for a.e. $x\in \mathbb R^n$. By Proposition \ref{V-properties}, it follows that $f(x)=a\cdot e^{-c \langle x_0,x\rangle/2}$ for  $a=\pm \sqrt{\alpha}e^{(V(0)-V(x_0))/2}\in \mathbb R$. Clearly, $\tilde x_0=-cx_0/2$ belongs to ${\rm Ker}(\nabla^2 V -cI_n)$,
which concludes the proof.

For the  converse statement, assume that $f(x)=a\cdot e^{ \langle x_0,x\rangle}$ for some $a\in \mathbb R$ and $x_0\in  {\rm Ker}(\nabla^2 V -cI_n)$. Then, if $\overline x_0=\frac{2}{c}x_0$, by taking Proposition \ref{V-properties} into account, a simple computation yields 
$${\rm Ent}_{\mu_V}(f^2)=a^2\frac{c}{2}|\overline  x_0|^2e^{-V(\overline  x_0)+V(0)+c|\overline  x_0|^2}\ \  {\rm and}\ \ \int_{\RR^n} |{\nabla f}|^2\, {\rm d}\mu_V=a^2|x_0|^2e^{-V(\overline  x_0)+V(0)+c|\overline  x_0|^2},$$
which concludes the proof. 
\end{proof}

\begin{remark}\rm\label{remark-equal}
The equality in \eqref{gaussian-logsob-00} has been analytically   characterized in the  paper of Arnold,   Markowich,  Toscani,  and Unterreiter \cite[Theorem 3.11]{CPDE-equality}. According to them, equality holds in \eqref{gaussian-logsob-00} for some positive $f\in  W^{1, 2}(\RR^n, \mu_V)$ if and only if there exist Cartesian coordinates $y=(y_1,...,y_n)=y(x)$ on $\mathbb R^n$ and a function $B:\mathbb R^{n-1}\to \mathbb R$ with the following properties:
\begin{itemize}
	\item[(i)] for some $\beta \in \mathbb R$ one has 
	$$V(x(y))=\frac{c}{2}y_1^2+\beta y_1 +B(y_2,...,y_n),$$
	\item[(ii)] for some $\xi\in \mathbb R$ one has
$$f=\frac{{\rm d}\rho}{{\rm d}e^{-V}}\ \ {\rm with}\ \ \rho=e^{-V(x(y))+\xi y_1-\frac{\xi^2}{2c}+\frac{\beta \xi}{c}}.$$
\end{itemize}
In the sequel, we prove that 
the characterization of the equality in Corollary \ref{corollary-LSI=}  is an alternative, algebraic reformulation  of   (i)\&(ii). To see this, let $x_0\in  {\rm Ker}(\nabla^2 V -cI_n)$. In particular, this implies that the function   $x\mapsto \langle\nabla V(x),x_0\rangle -c\langle x,x_0\rangle$ is constant, i.e., \begin{equation}\label{V-relation}
	\langle\nabla V(x),x_0\rangle -c\langle x,x_0\rangle=\langle\nabla V(0),x_0\rangle, \quad x\in \mathbb R^n.
\end{equation}   For simplicity, we assume that $x_0\neq 0$. If $e_1=x_0/|x_0|$, we complete $e_1$ to an orthonormal basis $\{e_1,...,e_n\}$ of $\mathbb R^n$. We consider the Cartesian coordinates $y_i(x)=\langle e_i,x\rangle$, $i\in \{1,...,n\}$ on $\mathbb R^n$ and $x(y)=\sum_{i=1}^n y_ie_i.$ In these coordinates,   relation \eqref{V-relation} implies that
$$\frac{\partial}{\partial y_1}V(x(y))=\langle\nabla V(x(y)),e_1\rangle=cy_1+\beta,  $$
where $\beta=\langle\nabla V(0),x_0\rangle/|x_0|$. Therefore, an integration with respect to $y_1$ yields (i) from above. 

On the other hand, since $f(x)=a\cdot e^{\langle x_0,x\rangle}$ for some (or, in fact, for any) $a>0$, see  Corollary \ref{corollary-LSI=}, it turns out that
$$\rho(x(y))=e^{-V(x(y))}f(x(y))=e^{-V(x(y))+\xi y_1-\frac{\xi^2}{2c}+\frac{\beta \xi}{c}},$$
with the identification $\xi=|x_0|$ and $\log a= -\frac{\xi^2}{2c}+\frac{\beta \xi}{c}.$

\end{remark}

We conclude this subsection with examples of potentials $V$ (and their corresponding null spaces) appearing in our results, illustrating how the algebraic representation in Corollary \ref{corollary-LSI=} reduces the problem of the equality case to simple eigenvector calculations.   

\begin{example}\rm \label{example-1}
	Let $c>0$ and $V:\mathbb R^n\to \mathbb R$ be defined by  $V(x)=\frac{c}{2}|x|^2+\sum_{i=1}^n a_i x_i^4$, where $a_i\geq 0$, $i\in \{1,...,n\}$. Then $\nabla^2 V -cI_n\geq 0$ and  $x_0=(x_1^0,...,x_n^0)\in \mathbb R^n$ belongs to ${\rm Ker}(\nabla^2 V -cI_n)$ if and only if $a_ix_i^0=0$ for every $i\in \{1,...,n\}$. 
\end{example}

\begin{example}\rm\label{example-2}
	Let $c>0$ and $V:\mathbb R^n\to \mathbb R$ be given by  $V(x)=\frac{c}{2}|x|^2+e^{\langle a,x\rangle}$, where $a\in \mathbb R^n$. Then $\nabla^2 V -cI_n\geq 0$ and  $x_0\in {\rm Ker}(\nabla^2 V -cI_n)$ if and only if $\langle a,x_0\rangle=0$. 
\end{example}

\subsection{Stability in radial Talagrand transport inequality (Theorem \ref{Theorem-Talagrand-radial}/(i))} Since $\mu_V$ and $\nu $ have radially symmetric densities, it follows that both the Kantorovich potential $g:\mathbb R^n\to \mathbb R$ and the  Brenier map $T(x)=x+\frac{1}{c}\nabla g(x)$ are also radially symmetric. In a similar manner as in the proof of Theorem \ref{Theorem-LSI-radial}/(i), one can prove that the inf-convolution $Lg$ is also radial. As a consequence, the functions  in \eqref{u-v-w-talag} are radial. 

Now we can follow the arguments from \S\ref{section-talag-1}; therefore, applying Corollary \ref{Stability_sharp-corollary}, it turns  out that  $$\int_{\RR^n}\abs{1-\frac{e^{Lg(x)}}{\int_{\RR^n}e^{Lg}\, {\rm d}\mu_V}}\, {\rm d}\mu_V\le C_n\cdot \left(\delta^{\sf Tal}_{\mu_V}(\nu)\right)^{\frac{1}{2}},$$
which is precisely the estimate \eqref{deficit-estimate-talagrand-radial}. 

\subsection{Optimality of the exponent 1/2 in \eqref{deficit-estimate-talagrand-radial} (Theorem \ref{Theorem-Talagrand-radial}/(ii))} The argument is similar to the one in the proof of Theorem \ref{Theorem-LSI-radial}/(ii).   Assume by contradiction that there exists an absolute constant $\eta>\frac{1}{2}$ and a dimension-depending number $C_n>0$  such that
\begin{equation}\label{contradiction-1-t}
	\int_{\RR^n}\abs{1-\frac{e^{Lg(x)}}{\int_{\RR^n}e^{Lg}\, {\rm d}\mu_V}}\, {\rm d}\mu_V\le C_n\cdot \left(\delta^{\sf Tal}_{\mu_V}(\nu)\right)^\eta
\end{equation}
for every radial potential $V\in C^2(\mathbb R^n)$ with $\nabla^2 V -cI_n\geq 0$ for some $c>0$ and every probability measure $\nu\ll \mu_V$ on $\mathbb R^n$ with radial density. Therefore,   \eqref{contradiction-1-t} is also valid for the quadratic function   $V(x)=\frac{c}{2}|x|^2+\frac{n}{2}\log(\frac{2\pi}{c})$.  Let ${\rm d}\mu_c:={\rm d}\mu_V =e^{-V}{\rm d}x$ be the probability measure on $\mathbb R^n$ associated with this potential $V$.

For every $\epsilon>0$, we consider the probability measure  $\nu_\epsilon$ on $\mathbb R^n$ by pushing forward   $\mu_c=\mu_V$ by means of the map $T_\epsilon(x)=\frac{x}{\sqrt{1+\epsilon/c}}$, $x\in \mathbb R^n$. Due to \eqref{contradiction-1-t}, one has for every $0<\epsilon\ll 1$ that   
\begin{equation}\label{contradiction-2-t}
	\int_{\RR^n}\abs{1-\frac{e^{Lg_\epsilon(x)}}{\int_{\RR^n}e^{Lg_\epsilon}\, {\rm d}\mu_c}}\, {\rm d}\mu_c\le C_n\cdot \left(\delta^{\sf Tal}_{\mu_c}(\nu_\epsilon)\right)^\eta,
\end{equation}
where $g_\epsilon:\mathbb R^n\to \mathbb R$ is the Kantorovich potential in the optimal map  $T_\epsilon(x):=x+\frac{1}{c}\nabla g_\epsilon(x)$   by pushing forward  $\mu_c $ into $\nu_\epsilon.$ Accordingly, $\nabla g_\epsilon(x)=c c_\epsilon x$, where $c_\epsilon=-1+\sqrt{\frac{c}{\epsilon+c}}$; thus
$g_\epsilon(x)=\frac{c}{2}c_\epsilon |x|^2+c_0$ for some $c_0\in \mathbb R.$   Moreover, 
$$\frac{{\rm d}\nu_\epsilon}{{\rm d}\mu_c}(x)=(1+\epsilon/c)^\frac{n}{2}e^{-\frac{\epsilon}{2}|x|^2}, \ \ x\in \mathbb R^n,$$
which implies 
$$D(\nu_\epsilon||\mu_c)=\int_{\RR^n} \frac{{\rm d}\nu_\epsilon}{{\rm d}\mu_c}\log \frac{{\rm d}\nu_\epsilon}{{\rm d}\mu_c} \, {\rm d}\mu_c=\frac{n}{2}\left(\log\left(1+\frac{\epsilon}{c}\right)-\frac{\epsilon}{\epsilon+c}\right).$$ A similar computation shows that
$$	W_2^2(\nu_\epsilon,\mu_c )=\int_{\mathbb R^n}|x-T_\epsilon(x)|^2{\rm d}\mu_c(x)=c_\epsilon^2\int_{\mathbb R^n}|x|^2{\rm d}\mu_c(x)=c_\epsilon^2\frac{n}{c}.$$
Combining these relations, one has for $0<\epsilon\ll 1$ that
\begin{eqnarray}\label{first-contr}
	\nonumber	\delta^{\sf Tal}_{\mu_c}(\nu_\epsilon)&=&D(\nu_\epsilon||\mu_c)-\frac{c}{2}W_2^2(\nu_\epsilon,\mu_c )=\frac{n}{2}\left(\log\left(1+\frac{\epsilon}{c}\right)-\frac{\epsilon}{\epsilon+c}-c_\epsilon^2\right)\\&=&\frac{n}{8c^2}\epsilon^2 +o(\epsilon^2).
\end{eqnarray}

On the other hand,   $Lg_\epsilon(x)=\frac{c c_\epsilon}{2(1+c_\epsilon)}|x|^2+c_0, $ $x\in \mathbb R^n,$  and 
$$\int_{\RR^n}e^{Lg_\epsilon}\, {\rm d}\mu_c=(1+c_\epsilon)^\frac{n}{2}e^{c_0}=\left(\frac{c}{\epsilon+c}\right)^\frac{n}{4}e^{c_0}.$$
Moreover, Fatou's lemma implies 
\begin{eqnarray*}
	LHS&:=&\liminf_{\epsilon\to 0}\frac{1}{\epsilon}\int_{\RR^n}\abs{1-\frac{e^{Lg_\epsilon(x)}}{\int_{\RR^n}e^{Lg_\epsilon}\, {\rm d}\mu_c}}{\rm d}\mu_c(x) \\&\geq& \int_{\R^n}\liminf_{\epsilon\to 0}\frac{1}{\epsilon}\abs{1-\left(\frac{\epsilon+c}{c}\right)^\frac{n}{4}e^{\frac{c c_\epsilon}{2(1+c_\epsilon)}|x|^2}} \mathrm{d}\mu_c (x)\\&=&\frac{1}{4}\int_{\R^n}\left| -|x|^2+\frac{n}{c} \right| \mathrm{d}\mu_c (x),
\end{eqnarray*}
where the last term is positive and finite. Now,   dividing \eqref{contradiction-2-t} by $\epsilon>0$ and taking the limit as $\epsilon\to 0$, by the latter estimate, relation \eqref{first-contr}, and our assumption $\eta>1/2$, it follows that
$$0<\frac{1}{4}\int_{\R^n}\left| -|x|^2+\frac{n}{c} \right| \mathrm{d}\mu_c (x)\leq C_n\cdot\limsup_{\epsilon\to 0^+}\frac{ \left(\delta^{\sf Tal}_{\mu_c}(\nu_\epsilon)\right)^\eta}{\epsilon}=0,$$
a contradiction. 

\section{Hypercontractivity estimates for Hopf--Lax semigroups: proof of Theorem \ref{prop-HC-log-Sob}}\label{section-5}

\subsection{Proof of Theorem \ref{prop-HC-log-Sob}/(i)} 
	Let us fix a smooth function $u:\mathbb R^n\to \mathbb R$ verifying the growth condition \eqref{growth-u}, and for every $\tau>0$, consider  
	\begin{align*}
		F(\tau)=\bigg(\int_{\mathbb R^n} e^{q(\tau ){\bf Q}_\tau u }{\rm d}\mu_V \bigg)^{1/q(\tau )}=\Vert e^{{\bf Q}_\tau u}\Vert_{L^{q(\tau )}(\mathbb R^n,{\rm d}\mu_V)}.
	\end{align*}
	As mentioned above, relations \eqref{V-estimate} and \eqref{growth-u} imply that $F$ is well-defined; in particular, for every $\tau >0$, one has that
	$e^{\frac{q(\tau )}{2}{\bf Q}_\tau u}\in L^2(\mathbb R^n,{\rm d}\mu_V).$
	Furthermore,   a minor modification of the argument  from Bobkov and Ledoux \cite{BobkovLedoux}  (see also Balogh, Krist\'aly, and Tripaldi \cite[Proposition 4.1]{BKT}) shows that both $\tau \mapsto F(\tau )$ and $(\tau ,x)\mapsto {\bf Q}_\tau u(x)$ are locally Lipschitz on $(0,\infty)$ and $(0,\infty)\times \mathbb R^n$, respectively.    We know that for a.e.\ $(x,\tau )\in \mathbb R^n\times (0,\infty),$ the  Hamilton--Jacobi equation holds, i.e., 
	\begin{equation}\label{HJacobi}
		\frac{\partial}{\partial \tau}{\bf Q}_\tau u(x)+\frac{1}{2}|\nabla {\bf Q}_\tau u(x)|^2=0. 
	\end{equation}
	Besides $e^{\frac{q(\tau )}{2}{\bf Q}_\tau u}\in L^2(\mathbb R^n,{\rm d}\mu_V),$  we claim that $e^{\frac{q(\tau )}{2}{\bf Q}_\tau u}\in W^{1,2}(\mathbb R^n,{\rm d}\mu_V)$ for a.e.\ $\tau>0$. Indeed, due to \eqref{growth-u},    a simple estimate shows  that ${\bf Q}_\tau u\cdot e^{q(\tau ){\bf Q}_\tau u}\in L^1(\mathbb R^n,{\rm d}\mu_V)$. Moreover, since $\tau\mapsto F(\tau)^{q(\tau)}$ is locally Lipschitz on $(0,\infty)$, thus differentiable a.e., one has that $ -\infty < \frac{{\rm d}}{{\rm d}\tau}F(\tau)^{q(\tau)}<\infty$  for a.e.\ $\tau\in (0,\infty).$ Thus, by means of \eqref{HJacobi}, we have for a.e.\ $\tau\in (0,\infty) $ that
	\begin{eqnarray*}
		-\infty &<& \frac{{\rm d}}{{\rm d}\tau}F(\tau)^{q(\tau)} \\&=&\int_{\mathbb R^n}\frac{{\rm d}}{{\rm d}\tau} e^{q(\tau ){\bf Q}_\tau u }{\rm d}\mu_V\\&=&q'(\tau)\int_{\mathbb R^n} {\bf Q}_\tau u\cdot e^{q(\tau ){\bf Q}_\tau u} {\rm d}\mu_V- \frac{q(\tau)}{2}\int_{\mathbb R^n}|\nabla {\bf Q}_\tau u(x)|^2e^{q(\tau ){\bf Q}_\tau u} {\rm d}\mu_V. 
	\end{eqnarray*}
	The latter relation implies that $$\displaystyle\int_{\mathbb R^n} 
	\left|\nabla (e^{\frac{q(\tau )}{2}{\bf Q}_\tau u})\right|^2{\rm d}\mu_V<\infty\quad {\rm for\ a.e.}\ \tau>0,$$  concluding the proof of the claim. 
	
	A direct computation, combined with   \eqref{HJacobi}, shows that for a.e.\ $\tau >0$ one has 
	\begin{equation}\label{F-derivative}
		F'(\tau )=\frac{F(\tau )^{1-q(\tau )}}{q(\tau )^2}\left(q'(\tau ){\rm Ent}_{\mu_V}(e^{q(\tau ){\bf Q}_\tau u})-2\int_{\mathbb R^n} 
		\left|\nabla (e^{\frac{q(\tau )}{2}{\bf Q}_\tau u})\right|^2{\rm d}\mu_V\right).
	\end{equation}
	By the convexity of $s\mapsto s\log s$ on $(0,\infty)$ and Jensen's inequality, we have ${\rm Ent}_{\mu_V}(e^{q(\tau ){\bf Q}_\tau u})\geq 0$. Therefore, in view of  $q'\leq c$ and the definition of the Bakry--\'Emery log-Sobolev deficit, relation \eqref{F-derivative} implies for a.e.\ $\tau>0$ that
	\begin{eqnarray}\label{F-estimate-c}
		F'(\tau )&\leq& \frac{F(\tau )^{1-q(\tau )}}{q(\tau )^2}\left(c\cdot {\rm Ent}_{\mu_V}(e^{q(\tau ){\bf Q}_\tau u})-2\int_{\mathbb R^n} 
		\left|\nabla (e^{\frac{q(\tau )}{2}{\bf Q}_\tau u})\right|^2{\rm d}\mu_V\right)\\&=& -c\frac{F(\tau )}{q(\tau )^2}\delta^{\sf LSI}_{\mu_V}(e^{\frac{q(\tau )}{2}{\bf Q}_\tau u}). \nonumber
	\end{eqnarray}
	If $t>0$ is fixed arbitrarily, an integration on $(0,t)$ yields relation \eqref{HC-LSI-deficits}, i.e.,  
	$$\delta^{\sf HC}_t(u)=\log \frac{F(0)}{F(t)}\geq c\int_0^t \frac{1}{q^2(\tau)}\delta^{\sf LSI}_{\mu_V}(e^{\frac{q(\tau)}{2}{\bf Q}_\tau u}) {\rm d}\tau.$$

\subsection{Proof of Theorem \ref{prop-HC-log-Sob}/(ii)}  	Assume now that equality holds in \eqref{hypercontractivity-estimate-Gauss-1} for some $t >0$ and a smooth function $u:\mathbb R^n\to \mathbb R$ verifying the growth condition \eqref{growth-u}. In particular, $\delta^{\sf HC}_t(u)=0$, and by taking \eqref{HC-LSI-deficits} into account, one has that \begin{equation}\label{def-lsi-zero}
		\delta^{\sf LSI}_{\mu_V}(e^{\frac{q(\tau)}{2}{\bf Q}_\tau u})=0\ \ \ {\rm for\ a.e.}\   \tau\in (0,t). 
	\end{equation}
%
%
		Applying  the inequality \eqref{relation-deficits} for the choice $f_0:=e^{\frac{q(\tau)}{2}{\bf Q}_\tau u}/F(\tau)^{\frac{q(\tau)}{2}}$, due to \eqref{def-lsi-zero}, it follows that 
	\begin{equation}\label{def-tal-zero}
		\delta^{\sf Tal}_{\mu_V}(e^{q(\tau){\bf Q}_\tau u}/F(\tau)^{{q(\tau)}}{\rm d}\mu_V)=0\ \ \ {\rm for\ a.e.}\   \tau\in (0,t). 
	\end{equation}
	By the characterization of the equality case in the Talagrand transport inequality (see Theorem \ref{Theorem-Talagrand}/(ii)), it turns out that there exists $x_0^\tau\in {\rm Ker}(\nabla^2 V -cI_n)$ such that 
	$$e^{q(\tau){\bf Q}_\tau u(x)}/F(\tau)^{q(\tau)}=e^{V(x)-V(x +x_0^\tau)}\ \ \ {\rm for\ a.e.}\   (x,\tau)\in \mathbb R^n\times (0,t).$$
	Since $V(x+x_0^\tau)- V(x)=c\langle x_0^\tau,x\rangle  + V(x_0^\tau)-V(0)$, cf. Proposition \ref{V-properties},  it follows by the previous relation that 
	$${\bf Q}_\tau u(x)=\langle \overline x_0^\tau,x\rangle+c_\tau\ \ \ {\rm for\ a.e.}\   (x,\tau)\in \mathbb R^n\times (0,t),$$
	for some $c_\tau\in \mathbb R$ and $\overline x_0^\tau\in  {\rm Ker}(\nabla^2 V -cI_n)$; in fact, $\overline x_0^\tau=-\frac{c}{q(\tau)}x_0^\tau.$ By continuity reasons, the latter relation is valid for every $x\in \mathbb R^n$ and a.e.\ $\tau\in (0,t).$
	Moreover,   $c_\tau={\bf Q}_\tau u(0)$ for a.e.\ $\tau\in (0,t)$. Consider $A\subset (0,t)$ having full measure where the latter relation holds, i.e., 
	\begin{equation}\label{Q-affine}
		{\bf Q}_\tau u(x)=\langle \overline x_0^\tau,x\rangle+{\bf Q}_\tau u(0)\ \ \ {\rm for\ every}\   (x,\tau)\in \mathbb R^n\times A.
	\end{equation}
	By the semigroup structure of ${\bf Q}_\tau$,    for every $\tau, \eta\in (0,t)$ and every $x \in  \mathbb R^n,$ we have the relation 
	\begin{equation} \label{semigroup-struc} 
		{\bf Q}_{\tau-\eta}{\bf Q}_\eta u(x) = {\bf Q}_\tau u(x).
	\end{equation} Moreover, note that  for almost every pair $(\tau,\eta)\in A\times A$ with $\tau>\eta$, one has that $\tau-\eta\in A$. For such a pair, the definition of the Hopf--Lax formula  (see \eqref{inf-convolution-0})  and relation  \eqref{Q-affine} imply that
	$${\bf Q}_{\tau-\eta}{\bf Q}_\eta u(x)={\bf Q}_{\tau-\eta}(\langle \overline x_0^\eta,\cdot \rangle+{\bf Q}_\eta u(0))(x)=\langle \overline x_0^\eta,x \rangle+ {\bf Q}_\eta u(0) - |\overline x_0^\eta|^2 \frac{\tau-\eta}{2}.$$
	Thus, an identification based on 
	\eqref{semigroup-struc} yields 
	$$\langle \overline x_0^\eta,x \rangle+ {\bf Q}_\eta u(0) - |\overline x_0^\eta|^2 \frac{\tau-\eta}{2}=\langle \overline x_0^\tau,x\rangle+{\bf Q}_\tau u(0),\ \ {x\in \mathbb R^n}.$$
	In particular, it follows that for a full measure  set  $A_0\subset (0,t)$, one has that $\overline x_0^\tau=x_0$ for every $\tau\in A_0;$  clearly,  $x_0\in {\rm Ker}(\nabla^2 V -cI_n)$.  Thus, in view of  relation  \eqref{Q-affine}, it follows that 
	$${\bf Q}_\tau u(x)=\langle  x_0,x\rangle+{\bf Q}_\tau u(0)\ \ \ {\rm for\ every}\   (x,\tau)\in \mathbb R^n\times A_0.$$ 
	Now, by continuity arguments, it follows that the last equality is in fact valid on $\mathbb R^n\times (0,t).$ Using \eqref{growth-u}, by letting $\tau\to 0^+$ in the previous relation, it follows that $u(x)=\langle  x_0,x\rangle+  u(0)$ for every $x\in \mathbb R^n.$
	
	Furthermore, equality in \eqref{hypercontractivity-estimate-Gauss-1} implies that there should be equality in \eqref{F-estimate-c} for a.e.\ $\tau\in (0,t)$; namely, we have 
	$$\left(q'(\tau )-c\right){\rm Ent}_{\mu_V}(e^{q(\tau ){\bf Q}_\tau u})=0\ \ \ {\rm for\ a.e.}\   \tau\in (0,t).$$ 
	
	Assume that there exists $\tau\in (0,t)$ such that ${\rm Ent}_{\mu_V}(e^{q(\tau ){\bf Q}_\tau u})=0$. By the strict convexity of $s\mapsto s\log s$ on $(0,\infty)$, it follows that for a.e.\ $x\in \mathbb R^n$  we have $e^{q(\tau ){\bf Q}_\tau u(x)}=C$ for some $C>0.$ In particular, this implies that $x_0=0$, thus $u={\bf Q}_\tau u$ is a constant; in this case, $q$ can be arbitrary. 
	If ${\rm Ent}_{\mu_V}(e^{q(\tau ){\bf Q}_\tau u})\neq 0$ for every $\tau\in (0,t)$, then $x_0\neq 0$ and $q'(\tau )-c=0$, hence $q(\tau)=c\tau +a$ for every $\tau\in (0,t)$ with $a=q(0)>0$.
	
	Conversely, if $u(x)=\langle  x_0,x\rangle+  c_0$, where $x_0\in {\rm Ker}(\nabla^2 V -cI_n)$ and $q(\tau)=a+c\tau$ for $\tau>0$, it follows that ${\bf Q}_\tau u(x)=\langle  x_0,x\rangle+c_0-\frac{\tau}{2}|x_0|^2.$ According to Proposition \ref{V-properties}/(ii), one has
	$$\int_{\mathbb R^n} e^{q(\tau ){\bf Q}_\tau u }{\rm d}\mu_V=e^{q(\tau)(c_0-\frac{\tau}{2}|x_0|^2)+c|\overline x_\tau|^2-V(\overline x_\tau)+V(0)}\ \  {\rm and}\ \ \int_{\mathbb R^n} e^{a u }{\rm d}\mu_V=e^{ac_0+\frac{a^2}{c}|x_0|^2-V(\frac{a}{c}x_0)+V(0)},$$
	where $\overline x_\tau=\frac{q(\tau) }{c}x_0.$ Thus, due to  Proposition \ref{V-properties}/(iii), the 
	hypercontractivity deficit is 
	\begin{eqnarray*}
		\delta^{\sf HC}_\tau(u)&=&\log \frac{\|e^{u}\|_{L^{a}(\mathbb R^n,{\rm d}\mu_V)}}{\|e^{{\bf Q}_\tau u}\|_{L^{q(\tau )}(\mathbb R^n,{\rm d}\mu_V)}}\\&=&\left(\frac{a}{c}+\frac{\tau}{2}\right)|x_0|^2-\frac{1}{q(\tau)}\left(c|\overline x_\tau|^2-V(\overline x_\tau)+V(0)\right)-\frac{1}{a}\left(V(\frac{a}{c}x_0)-V(0)\right)\\&=&0,
	\end{eqnarray*}
	which concludes the proof. \\
	
\noindent 	\textbf{Acknowledgments}. The authors thank Zolt\'an M. Balogh,  Emanuel Indrei, and Michel Ledoux   for
	stimulating conversations in the early stages of the manuscript. 

\vspace{1cm}

\noindent \textbf{\textit{Declarations}}\\

\noindent \textbf{Data Availability}. Data sharing is not applicable to this article as no datasets were
generated or analyzed during the current study.\\

\noindent \textbf{Conflict of interest}. The authors state that there is no conflict of interest.

%
%
%
%

\end{document}